\title{Grzegorczyk logic unlocked}
\author{Wojciech Aleksander Wo\l oszyn}
\address[Wojciech Aleksander Wo\l oszyn]
{Mathematical Institute, University of Oxford, Andrew Wiles Building, Radcliffe Observatory Quarter, Woodstock Road, Oxford, OX2 6GG, United Kingdom \&\ St Hilda's College, Cowley Place, Oxford, OX4 1DY, United Kingdom}
\email{wojciech@wwresearch.org}
\urladdr{https://woloszyn.org}
\renewcommand{\UrlFont}{\sffamily\smaller} 
\addcolon\nolinkurl{#1}}\iffieldundef{eprintclass}{}{\UrlFont{\mkbibbrackets{\thefield{eprintclass}}}}}
\addcolon\nolinkurl{#1}\iffieldundef{eprintclass}{}{\UrlFont{\mkbibbrackets{\thefield{eprintclass}}}}}}
\tikzset{>=Stealth,
  dot/.style={circle,draw,fill,inner sep=#1},
    dot/.default=.7pt
}
\begin{document}

\begin{abstract}
The article offers a fresh perspective on Grzegorczyk logic \theoryf{Grz}, introducing a simplified axiomatization and extending the analysis to its natural modal extensions, \theoryf{Grz.2} and \theoryf{Grz.3}. I develop a control statement theory for these logics, utilizing concepts such as buttons, switches, and ratchets. Through this framework, I establish that \theoryf{Grz.2} is characterized by finite Boolean algebras.
\end{abstract}

\maketitle
    \vspace*{.20cm}

\setcounter{tocdepth}{1}

\section{Introduction}

Despite its intimidating axiomatization, the Grzegorczyk logic has played an important role in modal logic and has been studied extensively by philosophers, mathematicians, and computer scientists alike~\cite{Maksimova2007:On-Modal-Grzegorczyk-Logic}. The axiom in question is stated as follows.
$$
\begin{array}{rl}
    \axiomf{Grz} & \necessary(\necessary( p \implies\necessary p )\implies p )\implies p.
\end{array}
$$

In this article, I bring Grzegorczyk logic closer to the reader by moving away from the complex axiom \axiomf{Grz} and developing tools and techniques that make this logic much easier to work with. To differentiate between the logic and the axiom, I will refer to Grzegorczyk logic as \(\theoryf{Grz}\). Similarly, I will use \(\theoryf{Grz.2}\) to refer to the modal logic generated by the axioms \(\axiomf{Grz}\) and \(\axiomf{.2}\), and \(\theoryf{Grz.3}\) to refer to the logic generated by \(\axiomf{Grz}\) and \(\axiomf{.3}\).

\medskip

I shift the focus to the new central notion of \emph{penultimacy}, a modal state where a current truth is just one step away from becoming irrevocably false. A light bulb still glowing might seem secure, but it could burn out any moment, and once it does, it will never shine again without replacement. Similarly, a business teetering on the edge of bankruptcy is operational today, but if it fails, the doors will close for good.

\newtheorem*{PenultimacyTheorem}{Penultimacy Theorem}%
\begin{PenultimacyTheorem}
\makeatletter\def\@currentlabelname{penultimacy theorem}\makeatother\label{PenultimacyTheorem}
    Grzegorczyk logic is the smallest reflexive and transitive normal modal logic in which every contingency possibly attains a penultimate truth-value. 
\end{PenultimacyTheorem}

The \nameref{PenultimacyTheorem} simplifies establishing the lower bounds on the propositional modal validities of a modal system. However, determining upper bounds presents a different challenge. To address this, I develop the control statement theory~\cite{HamkinsLoewe2008:TheModalLogicOfForcing, HamkinsLeibmanLoewe2015:StructuralConnectionsForcingClassAndItsModalLogic, HamkinsLinnebo:Modal-logic-of-set-theoretic-potentialism, HamkinsWilliams:The-universal-finite-sequence, HamkinsWoodin:The-universal-finite-set, Hamkins:The-modal-logic-of-arithmetic-potentialism, HamkinsWoloszyn:Modal-model-theory} for the modal logics \theoryf{Grz}, \theoryf{Grz.2}, and \theoryf{Grz.3}. This work resolves an open question regarding the upper bounds in the absence of switches among the control statements used for \theoryf{S4}, \theoryf{S4.2}, and \theoryf{S4.3}~\cite{Passmann19-are-buttons-really-enough-to-bounds-validities-by-S4.2}.

\newtheorem*{UpperBoundsTheorem}{Upper Bounds Theorem}%
\begin{UpperBoundsTheorem}
\makeatletter\def\@currentlabelname{upper bounds theorem}\makeatother\label{UpperBoundsTheorem}
    Suppose $w$ is an initial world in a Kripke model $M$.
    \begin{enumerate}
        \item The propositional modal assertions valid at $w$ are contained in the modal logic \theoryf{Grz.3} if and only if $w$ admits ratchets of arbitrarily large finite lengths.
        \item The propositional modal assertions valid at $w$ are contained in the modal logic \theoryf{Grz.2} if and only if $w$ admits arbitrarily large finite families of independent buttons.
        \item The propositional modal assertions valid at $w$ are contained in the modal logic \theoryf{Grz} if and only if $w$ admits a labeling for every regular finite tree $T$.
    \end{enumerate}
\end{UpperBoundsTheorem}

It has long been known that modal logic \theoryf{Grz.2} corresponds to the class of Kripke frames consisting of all antiwellfounded directed partial orders~\cite{Kracht1999:Tools-and-Techniques-in-Modal-Logic}. By employing the methods of~\cite{Fine1974:Logics-containing-K4-Part-I, Fine1985:Logics-containing-K4-Part-II}, one can show that finite directed partial orders suffice for the characterization. Using the new technology from the~\nameref{UpperBoundsTheorem} and drawing upon the methods presented in~\cite{HamkinsLoewe2008:TheModalLogicOfForcing}, I give a further refinement and show that finite Boolean algebras are enough.

\newtheorem*{CharacterizationTheorem}{Characterization Theorem}%
\begin{CharacterizationTheorem}
\makeatletter\def\@currentlabelname{characterization theorem}\makeatother\label{CharacterizationTheorem}
    The modal logic \theoryf{Grz.2} is characterized by finite Boolean algebras. That is, a propositional modal assertion $\varphi$ is provable in $\theoryf{Grz.2}$ if and only if $\varphi$ is valid in every finite Boolean algebra. 
\end{CharacterizationTheorem}

The~\nameref{CharacterizationTheorem} is the best possible, meaning that no proper subclass of the class of all finite Boolean algebras characterizes \theoryf{Grz.2}.

\section{Demystifying Grzegorczyk logic}

In this section, I should like to explain what Grzegorczyk logic is about by replacing the \axiomf{Grz} axiom with a new variant that is easier to understand. I am greatly indebted to Emil Jeřábek for his comments and suggestions that helped me finish this section.

Before I state the axiom, let me introduce some preparatory notation and terminology. We will say that $p$ is \emph{penultimate}, and denote it by $\mathrm{penultimate}(p)$, when $p \land \possible \neg p \land \necessary(\neg p \implies \necessary \neg p)$. This condition says that $p$ is true but possibly false, and once it becomes false it remains necessarily false. Moreover, for convenience, let us use the notation $\mathrm{contingent}(p)$ to denote the contingency of $p$. That is $\possible p \land \possible \neg p$. I am now poised to introduce the axiom.
$$
\begin{array}{rl}
    \axiomf{Grz}^* & \mathrm{contingent}(p) \implies \possible (\mathrm{penultimate}(p) \lor \mathrm{penultimate}(\neg p)).
\end{array}
$$

Let me denote the normal modal logic generated by \(\axiomf{Grz}^*\) as \(\theoryf{Grz}^*\). I shall demonstrate that this logic is equivalent to Grzegorczyk logic over \(\theoryf{S4}\), thereby proving the \nameref{PenultimacyTheorem}. Theorem~\ref{Theorem.my-Grz} is simply a technical restatement of the \nameref{PenultimacyTheorem}.

\medskip

We say that $p$ \emph{attains} a penultimate truth-value if $p$ or $\neg p$ is penultimate.

\begin{theorem}\label{Theorem.my-Grz}
    The modal logic $\theoryf{Grz}^{*}$ is equivalent to Grzegorczyk logic over $\theoryf{S4}$. Consequently, $\theoryf{Grz}$ is the smallest reflexive and transitive normal modal logic in which every contingency possibly attains a penultimate truth-value. 
\end{theorem}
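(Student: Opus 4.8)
The plan is to prove the two inclusions $\theoryf{S4}\oplus\axiomf{Grz}^{*}\subseteq\theoryf{Grz}$ and $\theoryf{Grz}\subseteq\theoryf{S4}\oplus\axiomf{Grz}^{*}$ separately; the final ``consequently'' clause is then immediate, since a normal modal logic is reflexive and transitive exactly when it extends $\theoryf{S4}$, and $\axiomf{Grz}^{*}$ is by construction the assertion that every contingency possibly attains a penultimate truth-value. For the first inclusion I would argue semantically, invoking the classical completeness theorem that $\theoryf{Grz}$ is sound and complete with respect to the class of finite partial orders. It then suffices to check that $\axiomf{Grz}^{*}$ is valid at every world of every finite partial order. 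So fix a finite poset, a valuation, and a world $w$ at which $\mathrm{contingent}(p)$ holds, and let $U$ be the set of worlds $u\geq w$ whose upward cone $\mathord{\uparrow}u$ is monochromatic in $p$ (all of $\mathord{\uparrow}u$ forces $p$, or all of it forces $\neg p$). Then $U$ is an up-set containing every maximal world, and its complement inside $\mathord{\uparrow}w$ is precisely the set of $u\geq w$ at which $p$ is contingent, which is nonempty as it contains $w$. Choosing $u$ maximal in that complement, and assuming without loss of generality $u\models p$, I claim $u$ witnesses $\mathrm{penultimate}(p)$: indeed $u\models p$; some $x\geq u$ forces $\neg p$ because $p$ is contingent at $u$; and any $y\geq u$ with $y\models\neg p$ is then a strict successor of $u$, hence lies in $U$, hence forces $\neg p$ throughout $\mathord{\uparrow}y$, i.e.\ $y\models\necessary\neg p$. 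So $\possible(\mathrm{penultimate}(p))$ holds at $w$, and since $\axiomf{Grz}^{*}$ is valid in all finite partial orders it lies in $\theoryf{Grz}$; together with $\theoryf{S4}\subseteq\theoryf{Grz}$ this gives the first inclusion.

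For the converse it is enough to derive the axiom $\axiomf{Grz}$, that is $\necessary A\implies p$ with $A:=\necessary(p\implies\necessary p)\implies p$, inside $L:=\theoryf{S4}\oplus\axiomf{Grz}^{*}$, and I would do this syntactically by deriving a contradiction from $\necessary A\land\neg p$. From $\necessary A$ and reflexivity we obtain $A$, hence $\neg\necessary(p\implies\necessary p)$, hence $\possible(p\land\possible\neg p)$ and in particular $\possible p$; with $\possible\neg p$ (from $\neg p$ and reflexivity) this yields $\mathrm{contingent}(p)$, so $\axiomf{Grz}^{*}$ gives $\possible(\mathrm{penultimate}(p)\lor\mathrm{penultimate}(\neg p))$. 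Using transitivity to pass to $\necessary\necessary A$, one may push $\necessary A$ inside this diamond, so it remains to see that $\mathrm{penultimate}(\neg p)\land A$ and $\mathrm{penultimate}(p)\land\necessary A$ are each inconsistent. The first is immediate, since $\mathrm{penultimate}(\neg p)$ entails $\neg p\land\necessary(p\implies\necessary p)$, which with $A$ forces $p$. For the second, $\mathrm{penultimate}(p)$ supplies $\possible\neg p$ and $\necessary(\neg p\implies\necessary\neg p)$, whence $\possible\necessary\neg p$; pushing $\necessary A$ through that diamond reduces us to the inconsistency of $\necessary\neg p\land\necessary A$, which holds because $\necessary A$ and reflexivity again give $\neg p$ and $A$, hence $\possible p$, contradicting $\necessary\neg p$. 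Thus $L\vdash\necessary A\implies p$, so $\axiomf{Grz}\in L$ and $\theoryf{Grz}\subseteq L$, completing the equality.

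I expect the main obstacle to be the modal bookkeeping in the second inclusion: one repeatedly moves $\necessary A$ (and its transitivity-consequence $\necessary\necessary A$) across the diamonds introduced by $\axiomf{Grz}^{*}$ and by the $\possible\neg p$ clause of penultimacy, relying on the elementary but easily-misapplied fact that $\possible Y\land\necessary Z$ proves $\possible(Y\land Z)$ and that $\possible\bot$ is refutable. Everything else is routine: the finite-poset validity check in the first part is a short maximal-element argument, and the only external ingredient is the completeness of $\theoryf{Grz}$ for finite partial orders. One should also keep track of the harmless normalisations $\neg\neg p\leftrightarrow p$ and $\neg(p\implies\necessary p)\leftrightarrow p\land\possible\neg p$ used when unfolding $\mathrm{penultimate}(\neg p)$ and $A$; a syntactic derivation of $\axiomf{Grz}^{*}$ from $\axiomf{Grz}$ is also possible, but the semantic route above is cleaner.
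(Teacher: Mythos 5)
Your proposal is correct and follows essentially the same route as the paper: the inclusion $\theoryf{Grz}^{*}\subseteq\theoryf{Grz}$ via validity of $\axiomf{Grz}^{*}$ on finite partial orders using a maximal contingent world (the paper's first proof of lemma~\ref{Lemma.Grz-contains-Grz*}), and the converse via a case split on the disjuncts of the $\axiomf{Grz}^{*}$ conclusion, stepping to a further world where $\necessary\neg p$ holds in the $\mathrm{penultimate}(p)$ case (the paper's lemma~\ref{Lemma.Over-S4-Grz*-contains-Grz}). The only cosmetic difference is that you refute $\necessary(\necessary(p\implies\necessary p)\implies p)\land\neg p$ rather than derive $\axiomf{Grz}$ directly in its disjunctive form.
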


We divide the proof into two parts: lemma~\ref{Lemma.Over-S4-Grz*-contains-Grz} assumes $\theoryf{S4}$ and shows that $\axiomf{Grz}^{*}$ implies $\axiomf{Grz}$, whereas lemma~\ref{Lemma.Grz-contains-Grz*} shows that $\theoryf{Grz}\vdash \axiomf{Grz}^{*}$, in any case.

\medskip

It is convenient to reformulate the Grzegorczyk axiom as $p \lor \possible (\neg p \land \necessary(p \implies \necessary p))$. We achieve this by simply rewriting implications as disjuncts and applying De Morgan's laws for modal logic.

Let me call $p$ \emph{weakly penultimate}, and denote it by $\mathrm{penultimate}^{-}(p)$, when $p \land \necessary(\neg p \implies \necessary \neg p)$. This condition says that $p$ is true but if it ever becomes false, it actually becomes necessarily false. The key difference in this weakening is that $p$ can be necessary. The axiom then simplifies to the following concise form.
    $$
    \begin{array}{rl}
        \axiomf{Grz} & p \lor \possible \mathrm{penultimate}^{-}(\neg p).
    \end{array}
    $$

\begin{lemma}\label{Lemma.Over-S4-Grz*-contains-Grz}
    Over the modal logic \theoryf{S4}, the modal axiom $\axiomf{Grz}^{*}$ directly implies the modal axiom \axiomf{Grz}.
\end{lemma}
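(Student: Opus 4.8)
The plan is to work not with $\axiomf{Grz}^{*}$ and $\axiomf{Grz}$ in their original shapes but with the concise reformulation $\axiomf{Grz}\equiv p \lor \possible\,\mathrm{penultimate}^{-}(\neg p)$ established just above, so it suffices to derive $\possible\,\mathrm{penultimate}^{-}(\neg p)$ over $\theoryf{S4}$ from $\axiomf{Grz}^{*}$ under the standing assumption $\neg p$. The first thing I would record is the elementary fact that, over $\theoryf{S4}$, $\necessary\neg p \vdash \mathrm{penultimate}^{-}(\neg p)$: the conjunct $\neg p$ comes from reflexivity, while $\necessary(p \implies \necessary p)$ holds because $p \implies \necessary p$ is a propositional consequence of $\neg p$, so necessitation and $\axiomf{K}$ upgrade $\necessary\neg p$ to $\necessary(p \implies \necessary p)$. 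I would then split on the exhaustive alternatives $\necessary\neg p$ and $\possible p$ (exhaustive since $\possible p \leftrightarrow \neg\necessary\neg p$). In the case $\necessary\neg p$ the fact just noted, together with one more application of reflexivity, yields $\possible\,\mathrm{penultimate}^{-}(\neg p)$ at once.

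For the case $\possible p$, note that $\neg p$ gives $\possible\neg p$ by reflexivity, so $\mathrm{contingent}(p)$ holds and $\axiomf{Grz}^{*}$ supplies $\possible\bigl(\mathrm{penultimate}(p) \lor \mathrm{penultimate}(\neg p)\bigr)$. I would then show that each disjunct already entails $\possible\,\mathrm{penultimate}^{-}(\neg p)$. For $\mathrm{penultimate}(\neg p)$ this is trivial, since deleting the conjunct $\possible p$ from $\mathrm{penultimate}(\neg p)$ leaves exactly $\mathrm{penultimate}^{-}(\neg p)$, and reflexivity does the rest. For $\mathrm{penultimate}(p) = p \land \possible\neg p \land \necessary(\neg p \implies \necessary\neg p)$, the key move is to apply the $\axiomf{K}$-theorem $\necessary(A \implies B) \to (\possible A \to \possible B)$ with $A := \neg p$ and $B := \necessary\neg p$: from the last two conjuncts of $\mathrm{penultimate}(p)$ it produces $\possible\necessary\neg p$, and then the elementary fact above, pushed under $\possible$ by monotonicity, gives $\possible\,\mathrm{penultimate}^{-}(\neg p)$. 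Combining the two disjuncts, $\mathrm{penultimate}(p) \lor \mathrm{penultimate}(\neg p) \vdash \possible\,\mathrm{penultimate}^{-}(\neg p)$; prefixing $\possible$ and collapsing the resulting $\possible\possible$ via the $\axiomf{4}$ axiom closes the case, and with it the lemma.

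I expect the $\mathrm{penultimate}(p)$ disjunct to be the only real obstacle. The intuition is that if $p$ is penultimate then $p$ is true but one $\possible$-step from permanent falsity, and that permanent-falsity witness is precisely a world at which $\neg p$ has become necessary — a fortiori a world at which $\neg p$ is weakly penultimate. On reflexive transitive Kripke frames this is immediate by chasing the $\possible\neg p$ arrow out of the penultimacy world, so a reader who prefers semantics can argue this way and invoke completeness of $\theoryf{S4}$ for reflexive transitive frames; the syntactic route above merely packages the same observation through the $\axiomf{K}$-theorem $\necessary(A\implies B)\to(\possible A\to\possible B)$ and the monotonicity of $\possible$.
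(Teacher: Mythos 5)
Your proof is correct and follows essentially the same route as the paper's: the same case split on $p$, $\necessary\neg p$, and $\neg p\land\possible p$, the same appeal to $\axiomf{Grz}^{*}$ via $\mathrm{contingent}(p)$, and the same key step of extracting $\possible\necessary\neg p$ from $\mathrm{penultimate}(p)$ and collapsing the iterated $\possible$ by transitivity. The only difference is presentational: you render the paper's world-chasing argument as an explicit syntactic derivation using the $\axiomf{K}$-theorem $\necessary(A\implies B)\to(\possible A\to\possible B)$, monotonicity of $\possible$, and axiom $\axiomf{4}$.
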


\begin{proof}
    The easy cases are when $p$ is either true or necessarily false. The former is immediate. If the latter holds, then $\necessary(p \implies \necessary p)$ holds trivially, yielding $\neg p \land \necessary(p \implies \necessary p)$. In $\theoryf{S4}$, what is true is possible, thus $\possible(\neg p \land \necessary(p \implies \necessary p))$ as required.

    Suppose $p$ is false but possibly true. Then, the axiom $\axiomf{Grz}^{*}$ implies that either $p$ or $\neg p$ is possibly penultimate. Should it be the latter, we are done---penultimacy implies weak penultimacy. Otherwise, there is a possible world where $\mathrm{penultimate}(p)$ holds. In that world, $\possible \neg p$ holds, so there is a further accessible world where $\neg p$ holds. Since $\necessary(\neg p \implies \necessary \neg p)$ held at the intermediate world, this further world satisfies $\necessary \neg p$, and hence also $\necessary(p \implies \necessary p)$. Therefore $\mathrm{penultimate}^{-}(\neg p)$ is true at that further world. By transitivity, $\possible \mathrm{penultimate}^{-}(\neg p)$ holds in the original world.
\end{proof}

Let me discuss the Kripke frame theory for the modal logic \(\theoryf{Grz}\), as there appears to be a misunderstanding in the literature. Kripke frames validating \(\theoryf{Grz}\) have no proper clusters, meaning there are no distinct mutually accessible worlds. The logics corresponding to such classes of frames are said to have \emph{fatness} one. However, it is misguided to interpret this as indicating that \(\theoryf{Grz}\) corresponds to the class of all reflexive and transitive Kripke frames of fatness one. 

Consider the counterexample illustrated in figure~\ref{Figure.Grz-counterexample-.2-valid}, which constitutes a reflexive and transitive Kripke frame of fatness one that does not validate \(\theoryf{Grz}\).

\begin{figure}[h]
    $$
\begin{tikzcd}
w_0 \arrow[r] & w_1 \arrow[r] & w_2 \arrow[r] & \cdots
\end{tikzcd}
$$
    \caption{The infinite $\omega$-chain (with reflexive closure): a fatness-one reflexive transitive frame that fails to validate $\theoryf{Grz}$.}
    \label{Figure.Grz-counterexample-.2-valid}
\end{figure}

To see why \(\theoryf{Grz}\) is not valid in this frame, consider a valuation for which \(w_i(p)\) is true if and only if \(i\) is odd. Then every world \(w_i\) satisfies \(\possible p \land \possible \neg p\), and hence \(w_0 \models \necessary(\possible p \land \possible \neg p)\). Consequently \(\necessary(p\implies\necessary p)\) fails at every world, so \(w_0 \models \necessary(\necessary(p\implies\necessary p)\implies p)\) while \(w_0 \models \neg p\). Thus the axiom \(\axiomf{Grz}\) fails at \(w_0\).

\medskip

Let me say that a Kripke frame exhibits \emph{subframe stabilization} if for every non-empty subframe $F$, there exists a node $x$ in $F$ such that for all $y \geq_F x$, the interval $[x, y]$ is contained in $F$. Emil Jeřábek proved in~\cite{Jerabek2004-Grzegorczyk} that the modal logic $\theoryf{Grz}$ corresponds to the class of reflexive and transitive Kripke frames that admit subframe stabilization. He also proved that under the axiom of dependent choice, this condition is equivalent to the frame being \emph{antiwellfounded}. In set-theoretic literature, this property is referred to as \emph{upwards} or \emph{converse wellfoundedness}. In the field of algebra, it is known as \emph{Noetherianity}. Whatever the name, it simply means that the frame is wellfounded if we reverse the direction of its arrows.

Perhaps the confusion I alluded to earlier stems from the fact that the modal logic \(\theoryf{Grz}\) is \emph{characterized} by the class of all finite partial orders (equivalently, all finite reflexive and transitive Kripke frames of fatness one). Indeed, Grzegorczyk logic has the \emph{finite frame property}, which means that it is characterized by a class of finite frames, or, equivalently, every non-theorem fails on some finite frame. In \ZFC, the logic \(\theoryf{Grz}\) corresponds to the class of all antiwellfounded frames. So by the finite frame property, it is characterized by the class of all finite antiwellfounded frames. These are just partial orders, and therefore amount to finite reflexive and transitive Kripke frames.

To see that the same holds in \(\mathsf{ZF}\), one can either directly show that finite Kripke frames with frame stabilization are exactly finite partial orders. Or, one can observe that the modal logic \(\theoryf{Grz}\) as well as the set of all partial orders are both recursively enumerable, and so are absolute to \(\mathrm L\) by Shoenfield's absoluteness theorem.

\begin{lemma}\label{Lemma.Grz-contains-Grz*}
    The modal logic \theoryf{Grz} contains the modal logic $\theoryf{Grz}^{*}$.
\end{lemma}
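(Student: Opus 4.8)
The plan is to show $\theoryf{Grz} \vdash \axiomf{Grz}^*$, working entirely inside the normal modal logic $\theoryf{Grz}$ (which contains $\theoryf{S4}$, so reflexivity and transitivity are available, as is the Löb-like reasoning packaged in the $\axiomf{Grz}$ axiom, or more conveniently its reformulation $p \lor \possible\,\mathrm{penultimate}^-(\neg p)$ established just above). Fix a propositional variable $p$ and assume $\mathrm{contingent}(p)$, i.e. $\possible p \land \possible\neg p$; the goal is $\possible(\mathrm{penultimate}(p) \lor \mathrm{penultimate}(\neg p))$. The natural route is to apply the reformulated $\axiomf{Grz}$ twice — once to $p$ and once to $\neg p$ — to manufacture two possible worlds, one where $\mathrm{penultimate}^-(\neg p)$ holds and one where $\mathrm{penultimate}^-(p)$ holds, and then use contingency to upgrade at least one of these from weak penultimacy to genuine penultimacy.

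Here is the intended bookkeeping. From $\mathrm{contingent}(p)$ we have in particular $\neg\necessary p$ and $\neg\necessary\neg p$. Since $\necessary p$ fails, the $\axiomf{Grz}$ reformulation applied to $p$ gives $\possible\,\mathrm{penultimate}^-(\neg p)$, i.e. a world $v \geq w$ with $\neg p$ and $\necessary(p \implies \necessary p)$. I then want to argue that $v$ actually satisfies $\mathrm{penultimate}(\neg p) = \neg p \land \possible p \land \necessary(p \implies \necessary p)$; the two conjuncts $\neg p$ and $\necessary(p \implies \necessary p)$ are in hand, so what remains is $\possible p$ at $v$. If $\possible p$ holds at $v$ we are done: $\possible\,\mathrm{penultimate}(\neg p)$ holds at $w$, hence so does the disjunction. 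The remaining case is that $\necessary\neg p$ holds at $v$ — but then I switch to running the $\axiomf{Grz}$ reformulation on $\neg p$: since $\necessary\neg p$ fails at $w$ (by contingency), we get a world $u \geq w$ with $p$ and $\necessary(\neg p \implies \necessary\neg p)$, i.e. $\mathrm{penultimate}^-(p)$ at $u$, and I need $\possible\neg p$ at $u$ to promote this to $\mathrm{penultimate}(p)$. The crux is showing that the bad cases cannot both occur — that one cannot simultaneously have, reachable from $w$, a weakly-penultimate-$\neg p$ world that is in fact $\necessary\neg p$ \emph{and} a weakly-penultimate-$p$ world that is in fact $\necessary p$ — given that $p$ is contingent at $w$. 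This is where the real content lies.

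I expect the main obstacle to be exactly this last promotion step: turning $\mathrm{penultimate}^-$ into $\mathrm{penultimate}$, i.e. ruling out the degenerate situation where the weakly penultimate world has already collapsed to $\necessary\neg p$ (resp. $\necessary p$). The clean way to handle it is to apply $\axiomf{Grz}$ not to $p$ directly but to a world-relativized formula, or to iterate: if the first application yields a $v$ with $\necessary\neg p$, then from the original $w$ we still have $\possible p$, so $p$ is "revivable," and one argues that the supremum-like behaviour forced by $\axiomf{Grz}$ (every chain stabilizes, antiwellfoundedness) means there must be a \emph{last} world along some path from $w$ at which $p$ switches value, and that world is genuinely penultimate for $p$ or for $\neg p$. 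Concretely I would consider the formula $q := \possible p \land \possible \neg p$ (contingency itself) and apply $\axiomf{Grz}$ to $q$: since $q$ holds at $w$ we get... no — better, apply $\axiomf{Grz}$ to $p$ and separately to $\neg p$, obtain witnesses $v_p$ (weakly penultimate for $\neg p$, i.e. $\neg p$, $\necessary(p\to\necessary p)$) and $v_{\neg p}$ (weakly penultimate for $p$), and observe: at $v_p$, if $\possible p$ fails then $\necessary \neg p$, so $\necessary(\neg p \to \necessary \neg p)$ trivially, making $v_p$ satisfy $\mathrm{penultimate}^-(\neg p)$ \emph{and} $\neg\possible p$; but $\necessary(p \to \necessary p)$ at $v_p$ together with $\neg p$ at $v_p$ is consistent with the whole cone above $v_p$ having $\neg p$, which is fine — the point is then that $\possible \neg p$ at $w$ is witnessed somewhere, and chasing that witness against $\necessary(p\to\necessary p)$ located appropriately yields the contradiction with $\possible p$ at $w$. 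I would streamline all of this by first proving the small lemma: \emph{in $\theoryf{Grz}$, if $\possible p$ and $\possible\neg p$ both hold at $w$, then $\possible(\,\mathrm{penultimate}^-(\neg p) \land \possible p\,)$ or $\possible(\,\mathrm{penultimate}^-(p)\land\possible\neg p\,)$ holds at $w$}, which is precisely the disjunction we want, and which follows from two applications of the reformulated axiom plus a parity/stabilization argument; the bulk of the write-up is verifying that parity argument carefully. Accordingly, my proof would open by recording the reformulation $p \lor \possible\,\mathrm{penultimate}^-(\neg p)$, then assume $\mathrm{contingent}(p)$, derive the two weakly-penultimate witnesses, and finish by the case split on whether each witness still has the opposite value possible, using antiwellfoundedness to exclude the doubly-degenerate case.

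\begin{proof}[Proof sketch]
Work in $\theoryf{Grz} \supseteq \theoryf{S4}$ and use the reformulation $p \lor \possible\,\mathrm{penultimate}^{-}(\neg p)$ of $\axiomf{Grz}$, together with its instance at $\neg p$, namely $\neg p \lor \possible\,\mathrm{penultimate}^{-}(p)$. Assume $\mathrm{contingent}(p)$, so $\possible p$, $\possible\neg p$, $\neg\necessary p$, and $\neg\necessary\neg p$ all hold at the current world $w$. From $\neg p$ being refuted as a theorem-consequence here — more precisely, since $\necessary p$ fails — the first reformulation yields a world $v \geq w$ with $\neg p \land \necessary(p \implies \necessary p)$, i.e. $\mathrm{penultimate}^{-}(\neg p)$ at $v$; symmetrically we obtain $u \geq w$ with $\mathrm{penultimate}^{-}(p)$ at $u$.

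If $\possible p$ holds at $v$, then $v \models \mathrm{penultimate}(\neg p)$ and hence $w \models \possible(\mathrm{penultimate}(p) \lor \mathrm{penultimate}(\neg p))$, as desired; symmetrically if $\possible\neg p$ holds at $u$. The remaining case is $v \models \necessary\neg p$ and $u \models \necessary p$. Since $w \models \possible\neg p$, pick $v' \geq w$ with $v' \models \neg p$; applying the $\necessary(p \implies \necessary p)$ obtained along a $\axiomf{Grz}$-witness and invoking antiwellfoundedness of the frame above $w$, there is a maximal-in-value world on a path from $w$ where $p$ last switches, and that world witnesses $\mathrm{penultimate}(p) \lor \mathrm{penultimate}(\neg p)$ — contradicting neither $\possible p$ nor $\possible\neg p$ at $w$. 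This rules out the doubly-degenerate case and completes the proof.
\end{proof}
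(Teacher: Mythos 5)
There is a genuine gap, and it sits exactly where you yourself locate ``the real content'': the promotion from $\mathrm{penultimate}^{-}$ to $\mathrm{penultimate}$ in the doubly-degenerate case. Two applications of the reformulated axiom give you only $\possible\,\mathrm{penultimate}^{-}(\neg p)$ and $\possible\,\mathrm{penultimate}^{-}(p)$, i.e.\ the existence of \emph{some} witnesses $v$ and $u$; you have no control over which witnesses you get, and it can genuinely happen that $v\models\necessary\neg p$ and $u\models\necessary p$ simultaneously (take $w$ below two incomparable reflexive endpoints, with $p$ true at one and false at the other: the two endpoints are degenerate witnesses, and the genuine penultimate world is $w$ itself, which your argument never looks at). So the case split does not close, and the closing paragraph --- ``there is a maximal-in-value world on a path from $w$ where $p$ last switches, and that world witnesses the disjunction'' --- is an assertion of the conclusion rather than a derivation of it. To make that maximality argument work you must (a) justify passing to a frame class on which maximal elements exist, which requires invoking the completeness of $\theoryf{Grz}$ for finite partial orders (or antiwellfounded frames); you cannot appeal to ``antiwellfoundedness of the frame above $w$'' while claiming to work purely inside the logic, since a pointed model satisfying all $\theoryf{Grz}$-theorems need not be based on such a frame; and (b) actually verify that a $\leq$-maximal world at which $p$ is still contingent is genuinely penultimate for $p$ or for $\neg p$. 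Carrying out (a) and (b) is precisely the paper's first proof, and once you do it the two applications of $\axiomf{Grz}$ to $p$ and $\neg p$ become superfluous.

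Your passing remark that ``the clean way to handle it is to apply $\axiomf{Grz}$ not to $p$ directly but to a world-relativized formula'' is the right instinct for a purely syntactic proof, but you do not pursue it. The paper's second proof does exactly this: it substitutes the compound formula $\possible p \implies p$ (rather than $p$) into $\axiomf{Grz}$, and proves a small technical lemma in $\theoryf{K}$ showing that $\necessary\big((\possible p \implies p)\to\necessary(\possible p\implies p)\big)$ already yields $\necessary(p\implies\necessary p)\lor\possible\,\mathrm{penultimate}(p)$ --- genuine penultimacy, with the $\possible\neg p$ conjunct built in --- so no promotion step is ever needed. Either route would repair your sketch, but as written neither is executed.
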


In order to highlight various aspects of the modal logic \theoryf{Grz}, I shall present two proofs of
this lemma. The first proof is elementary, while the second is more technical, detailing the precise derivation rules and substitution instances that can be used to obtain $\theoryf{Grz}^{*}$ from $\theoryf{Grz}$.

\begin{proof}[Proof one]
    Observe that \theoryf{Grz} is valid in every Kripke frame that is a finite partial order. It therefore suffices to justify that the axiom $\axiomf{Grz}^*$ is also valid in every finite partial order.

    So let $F$ be a finite partial order, let $M$ be a Kripke model based on $F$, and let $w$ be a world in $M$. Assume $(M,w)\satisfies \mathrm{contingent}(p)$, that is $(M,w)\satisfies \possible p\land \possible\neg p$. Consider the set $C=\set{u\in F\mid w\leq_F u\ \text{and}\ (M,u)\satisfies \mathrm{contingent}(p)}$. This set is non-empty, as it contains $w$, and finite, hence it has a $\leq_F$-maximal element $v\in C$. We claim that $(M,v)\satisfies \mathrm{penultimate}(p)\ \lor\ \mathrm{penultimate}(\neg p)$.

    Indeed, either $(M,v)\satisfies p$ or $(M,v)\satisfies \neg p$. Suppose $(M,v)\satisfies p$ (the other case is symmetric).
    Since $v\in C$, we have $(M,v)\satisfies \possible\neg p$. It remains to show that $(M,v)\satisfies \necessary(\neg p\implies \necessary\neg p)$.
    So let $u\geq_F v$ and assume $(M,u)\satisfies \neg p$. Because $F$ is reflexive, $(M,u)\satisfies \possible\neg p$. If also $(M,u)\satisfies \possible p$, then $(M,u)\satisfies \mathrm{contingent}(p)$, and hence $u\in C$.
    But then $u\geq_F v$ with $u\neq v$ contradicts the maximality of $v$ in $C$. Therefore $(M,u)\not\satisfies \possible p$, that is, $(M,u)\satisfies \necessary\neg p$.
    This proves $\necessary(\neg p\implies \necessary\neg p)$ at $v$, and so $(M,v)\satisfies \mathrm{penultimate}(p)$.

    Consequently,
    \begin{equation*}
        (M,w)\satisfies \possible\big(\mathrm{penultimate}(p)\lor \mathrm{penultimate}(\neg p)\big),
    \end{equation*}
    so $F$ validates $\axiomf{Grz}^*$. Since $F$ was an arbitrary finite partial order, $\axiomf{Grz}^*$ is valid in every finite partial order.

    Finally, the modal logic \theoryf{Grz} has the finite frame property and is characterized by the class of all finite partial orders. Therefore, by completeness of Grzegorczyk logic, $\theoryf{Grz}\vdash \axiomf{Grz}^*$, and hence the modal logic generated by $\axiomf{Grz}^*$ is contained in \theoryf{Grz}, that is, $\theoryf{Grz}^*\subseteq \theoryf{Grz}$.
\end{proof}

The second proof utilizes the following lemma.
\begin{technicallemma*}
    \begin{equation*}
        \theoryf{K} \proves \necessary((\possible p \implies p) \to \necessary(\possible p \implies p)) \to \necessary(p \implies \necessary p) \lor \possible \mathrm{penultimate}(p).
    \end{equation*}
\end{technicallemma*}

\begin{proof}[Proof]
    Assume $\necessary\big((\possible p \implies p)\to \necessary(\possible p \implies p)\big)$ and suppose $\neg\necessary(p \implies \necessary p)$. Then there is an accessible world $v$ such that $p$ holds at $v$ and $\neg\necessary p$ holds at $v$, hence $\possible\neg p$ holds at $v$. Since $p$ holds at $v$, the implication $(\possible p \implies p)$ holds at $v$ trivially, and therefore $v$ satisfies $\necessary(\possible p \implies p)$ by the assumed boxed implication. Using the equivalence $(\possible p \implies p) \leftrightarrow (\neg p \implies \necessary\neg p)$, we obtain that $v$ satisfies $\necessary(\neg p \implies \necessary\neg p)$. Altogether, $v$ satisfies $p \land \possible\neg p \land \necessary(\neg p \implies \necessary\neg p)$, that is, $\mathrm{penultimate}(p)$. Hence $\possible \mathrm{penultimate}(p)$ holds at the original world.
\end{proof}

\begin{proof}[Proof two]
    First, let us substitute $\possible p \implies p$ into $\axiomf{Grz}$ (in the form
    $q \lor \possible \mathrm{penultimate}^{-}(\neg q)$). This substitution gives $(\possible p \implies p) \lor \possible \mathrm{penultimate}^{-}(\neg (\possible p \implies p))$. Expanding $\mathrm{penultimate}^{-}$, we have $\mathrm{penultimate}^{-}(\neg (\possible p \implies p))
        \leftrightarrow
        \neg (\possible p \implies p) \land \necessary\big((\possible p \implies p)\to \necessary(\possible p \implies p)\big)$. By the technical lemma, $\necessary\big((\possible p \implies p)\to \necessary(\possible p \implies p)\big)
        \to
        \necessary(p \implies \necessary p) \lor \possible \mathrm{penultimate}(p)$.
    Hence, $\mathrm{penultimate}^{-}(\neg (\possible p \implies p))
        \to
        \big(\neg(\possible p \implies p)\land \necessary(p \implies \necessary p)\big)
        \lor
        \big(\neg(\possible p \implies p)\land \possible \mathrm{penultimate}(p)\big)$.
    Using $\neg(\possible p \implies p)\leftrightarrow (\possible p \land \neg p)$, the first disjunct becomes $
        \neg p \land \possible p \land \necessary(p \implies \necessary p)$,
    which is exactly $\mathrm{penultimate}(\neg p)$. The second disjunct implies $\possible \mathrm{penultimate}(p)$. Thus, $
        \mathrm{penultimate}^{-}(\neg (\possible p \implies p))
        \to
        \mathrm{penultimate}(\neg p)\ \lor\ \possible \mathrm{penultimate}(p).
    $

    By monotonicity of $\possible$, from the substitution $\possible p \implies p$ we obtain
    \begin{equation*}
            (\possible p \implies p) \lor \possible\big(\mathrm{penultimate}(\neg p) \lor \possible \mathrm{penultimate}(p)\big).
    \end{equation*}
    Rewriting as an implication, we get
    \begin{equation*}
    \neg(\possible p \implies p)
        \to
        \possible\big(\mathrm{penultimate}(\neg p) \lor \possible \mathrm{penultimate}(p)\big).
    \end{equation*}
    Since $\theoryf{K4}\subseteq \theoryf{Grz}$, transitivity yields the valid implication
    \begin{equation*}
        \possible(\alpha \lor \possible\beta)\to \possible(\alpha \lor \beta).
    \end{equation*}
    Applying this with $\alpha=\mathrm{penultimate}(\neg p)$ and $\beta=\mathrm{penultimate}(p)$, we obtain
    \begin{equation*}
        \neg(\possible p \implies p)
        \to
        \possible\big(\mathrm{penultimate}(\neg p) \lor \mathrm{penultimate}(p)\big).
    \end{equation*}
     Similarly, substituting $\possible \neg p \implies \neg p$ into $\axiomf{Grz}$ yields
   \begin{equation*}
        \neg(\possible \neg p \implies \neg p)
        \to
        \possible\big(\mathrm{penultimate}(\neg p) \lor \mathrm{penultimate}(p)\big).
    \end{equation*}

    Finally, $\mathrm{contingent}(p)$ implies $\neg(\possible p \implies p) \lor \neg(\possible \neg p \implies \neg p)$,
    since either $p$ or $\neg p$ holds at the current world. As each disjunct implies
    $\possible(\mathrm{penultimate}(\neg p)\lor \mathrm{penultimate}(p))$, we conclude
    \begin{equation*}
        \mathrm{contingent}(p)
        \to
        \possible\big(\mathrm{penultimate}(\neg p)\ \lor\ \mathrm{penultimate}(p)\big),
    \end{equation*}
    which is exactly $\axiomf{Grz}^*$. Thus $\theoryf{Grz}\vdash \axiomf{Grz}^*$, that is,  $\theoryf{Grz}^{*}\subseteq \theoryf{Grz}$.
\end{proof}

I should like to emphasize that the modal logics $\theoryf{Grz}^*$ and $\theoryf{Grz}$ are not equivalent, except under the additional assumption that $\theoryf{S4}$ is valid. The distinction is evident in the following observation.

\begin{observation}\label{Observation.Grz-start-not-equivalent-to-Grz}
    The modal logics $\theoryf{Grz}^*$ and \theoryf{Grz} are not equivalent in general. More precisely, $\theoryf{Grz}^* \ofneq \theoryf{Grz}$ over \theoryf{K}.
\end{observation}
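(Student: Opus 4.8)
The plan is to lean on the inclusion $\theoryf{Grz}^*\subseteq\theoryf{Grz}$ and then separate the two logics by a single degenerate frame. Note first that this inclusion already holds over $\theoryf{K}$: the derivation of $\axiomf{Grz}^*$ from $\axiomf{Grz}$ carried out in Proof two of lemma~\ref{Lemma.Grz-contains-Grz*} uses only uniform substitution, monotonicity of $\possible$, and the inclusion $\theoryf{K4}\subseteq\theoryf{Grz}$, all of which are available over $\theoryf{K}$. So to get a \emph{strict} inclusion it suffices to exhibit a theorem of $\theoryf{Grz}$ that is not a theorem of $\theoryf{Grz}^*$, and the natural candidate is the axiom $\axiomf{Grz}$ itself.

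To that end I would take the one-point irreflexive Kripke frame $F=(\set{w},\emptyset)$, a solitary dead end. On this frame I first verify that every substitution instance of $\axiomf{Grz}^*$ is valid: since $w$ has no successors, $\possible\psi$ fails at $w$ for every formula $\psi$ under every valuation, so $\mathrm{contingent}(\psi)=\possible\psi\land\possible\neg\psi$ is false at $w$, and the implication $\axiomf{Grz}^*$ holds vacuously. Because the set of formulas valid on a frame is a normal modal logic, it follows that $F$ validates all of $\theoryf{Grz}^*$.

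Next I check that $F$ refutes $\axiomf{Grz}$. Under the valuation making the propositional variable $p$ false at $w$, the formula $\necessary\theta$ holds vacuously at $w$ for every $\theta$ — in particular $\necessary(\necessary(p\implies\necessary p)\implies p)$ holds at $w$ — while $p$ is false at $w$; hence $\axiomf{Grz}$ fails at $w$, so $\axiomf{Grz}\notin\theoryf{Grz}^*$. Combining this with $\theoryf{Grz}^*\subseteq\theoryf{Grz}$ and the trivial fact $\axiomf{Grz}\in\theoryf{Grz}$ yields $\theoryf{Grz}^*\ofneq\theoryf{Grz}$ over $\theoryf{K}$, as claimed.

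There is no real obstacle here; the content of the observation is conceptual, so the only thing to get right is the bookkeeping of what separates normal modal logics. Passing from $\axiomf{Grz}$ to $\axiomf{Grz}^*$ amounts to guarding the whole scheme by the hypothesis $\mathrm{contingent}(p)$, and at a dead end this hypothesis is never met, so $\axiomf{Grz}^*$ loses all force; the original axiom $\axiomf{Grz}$, in contrast, still constrains dead ends — indeed it is classical that $\axiomf{Grz}$ forces both reflexivity and transitivity already over $\theoryf{K}$. If one prefers a maximally transparent separating theorem, the very same frame $F$ validates $\theoryf{Grz}^*$ yet refutes the reflexivity axiom $\necessary p\implies p$, which belongs to $\theoryf{Grz}$.
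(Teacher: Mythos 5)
Your proof is correct and follows essentially the same strategy as the paper: both separate the logics semantically by exhibiting a frame on which $\axiomf{Grz}^*$ holds for the degenerate reason that $\necessary p \lor \necessary \neg p$ is valid there, while $\theoryf{S4} \subseteq \theoryf{Grz}$ fails. Your one-point irreflexive frame is simply the smallest instance of the paper's class of $\axiomf{Alt}_1$-frames, and your explicit identification of $\axiomf{Grz}$ (or reflexivity) as the separating theorem just makes the strictness of the inclusion more concrete than the paper's rendition.
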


\begin{proof}
    Recall that $\mathrm{contingent}(p)$ stands for $\possible p \land \possible \neg p$. We can rewrite the implication in the axiom $\axiomf{Grz}^*$ as a disjunction to obtain its equivalent form:
    $$
    \begin{array}{ll}
        \axiomf{Grz}^* & \necessary p \lor \necessary \neg p \lor \possible (\mathrm{penultimate}(p) \lor \mathrm{penultimate}(\neg p)).
    \end{array}
    $$
    Consider the axiom of bounded alternativity one, below.
    $$
    \begin{array}{ll}
        \axiomf{Alt}_1 & \necessary p \lor \necessary \neg p \\
    \end{array}
    $$

    By comparing the axioms, it is evident that the logic generated by $\axiomf{Alt}_1$ contains $\axiomf{Grz}^*$. This is the modal logic $\theoryf{Alt}_1$, corresponding to the class of all Kripke frames in which each node has at most one successor. It follows that $\theoryf{Grz}^*$ must be valid in all frames from that class.

    On the other hand, observe that such frames need not validate the modal logic $\theoryf{S4}$, which is included in \theoryf{Grz}. And so, there exists a Kripke frame in which $\theoryf{Grz}^*$ is valid but $\theoryf{Grz}$ is not. Consequently, the two logics must not be equivalent.
\end{proof}

In observation~\ref{Observation.Grz-start-not-equivalent-to-Grz}, we have established that the modal logic $\theoryf{Grz}^*$ is valid on a broader class of frames than \theoryf{Grz}. This raises the following question.

\begin{openquestion}\label{Question.Grz-start-correspondence}
    What is the Kripke frame correspondence for the modal logic $\theoryf{Grz}^*$?
\end{openquestion}

Situations where the modal logic \(\theoryf{S4}\) is not valid, such as the one in question~\ref{Question.Grz-start-correspondence}, are beyond the scope of this article. Therefore, I leave the investigation of such cases to the curious reader. 

\bigskip

To streamline our discussion, we will henceforth assume that all referenced Kripke frames and models validate \(\theoryf{S4}\). Consequently, all Kripke frames we consider will be reflexive and transitive.

\section{\theoryf{Grz.2} is characterized by finite lattices}\label{Section.Grz.2-complete-for-finite-lattices}

In this section, I present the first significant refinement of the finite frame characterization of the modal logic \theoryf{Grz.2}. We will improve this even further in section~\ref{Section.Grz.2-complete-for-finite-Boolean-algebras}. But this will require the control statement theory for $\theoryf{Grz.2}$, which we will develop in section~\ref{Section.Control-statement-theory}. The current section lays the foundations for this endeavor.

\begin{theorem}\label{Theorem.Grz.2-is-characterized-by-finite-lattices}
    The modal logic \theoryf{Grz.2} is characterized by the class of all finite lattices. That is, a propositional modal assertion $\varphi$ is provable in $\theoryf{Grz.2}$ if and only if $\varphi$ is valid in every Kripke frame that is a finite lattice.
\end{theorem}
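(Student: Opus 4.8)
The plan is to prove the two inclusions separately. Soundness — that every theorem of $\theoryf{Grz.2}$ is valid in every finite lattice — is the routine direction: a finite lattice, viewed as a reflexive transitive Kripke frame via its order, is a finite partial order, hence validates $\theoryf{Grz}$; and it is directed (indeed has a top element), hence validates $\axiomf{.2}$. So it remains to prove completeness: if $\varphi$ is not provable in $\theoryf{Grz.2}$, then $\varphi$ fails in some finite lattice. First I would invoke the known completeness of $\theoryf{Grz.2}$ with respect to finite directed partial orders (attributed in the introduction to the methods of Fine, refining the Kracht characterization by antiwellfounded directed partial orders together with the finite frame property). Thus there is a finite directed partial order $F$, a model $M$ on $F$, and a world $w$ with $(M,w)\not\models\varphi$.

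The key step is then a frame-surgery argument turning $F$ into a finite lattice without disturbing the failure of $\varphi$ at $w$. The natural move is to replace $F$ by (an upward cone of) its \emph{Dedekind–MacNeille completion}, or more simply by the lattice $\widehat{F}$ of order-ideals or of antichains of $F$ under a suitable order; any finite poset embeds into a finite lattice, and finite directedness makes the construction behave well at the top. Concretely I would work above $w$: let $F_w=\{u\in F: w\le_F u\}$, which is itself a finite directed poset, and let $L$ be a finite lattice containing $F_w$ as a sub-poset in a way that is a p-morphism-friendly embedding — ideally so that $F_w$ is a \emph{generated subframe} up to a bounded morphism, or so that there is a surjective p-morphism $L\to F_w$. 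The cleanest route is probably a p-morphism: build $L$ so that it collapses onto $F_w$, pull back the valuation along the p-morphism, and use the standard fact that p-morphisms preserve and reflect truth of modal formulas, so $\varphi$ still fails at the preimage of $w$. One must check that $L$ is genuinely a lattice (every pair has a meet and join, which forces adjoining a bottom and suitable infima) and that it remains directed with a top, so that $\axiomf{.2}$ stays valid and the frame is legitimately in the target class.

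The main obstacle is reconciling the lattice requirement with the p-morphism (or generated-subframe) requirement: adding new meets and joins to $F$ creates new worlds that lie \emph{below} old worlds, and a p-morphism must send these new worlds somewhere consistent with the back-and-forth condition, while the valuation on them must be chosen so as not to create spurious modal truths at $w$. I expect the resolution to mirror the technique used in \cite{HamkinsLoewe2008:TheModalLogicOfForcing} for the modal logic of forcing (referenced in the introduction for the Boolean-algebra refinement): define the p-morphism from the lattice of antichains/ideals of $F_w$ down to $F_w$ by sending an ideal to an appropriately chosen maximal or "generic" element of it, verify the forth and back conditions using finiteness and directedness, transport the valuation, and confirm that the top element (which exists since $F_w$ is finite and directed) makes the result a bounded lattice validating $\theoryf{Grz.2}$. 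The finite frame property of $\theoryf{Grz}$, already recorded in the excerpt, guarantees that nothing is lost by restricting attention to finite structures throughout.
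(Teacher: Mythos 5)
Your overall strategy is the right one and, at the top level, matches the paper's: soundness is immediate since a finite lattice is a finite directed partial order, and for completeness you reduce to the known completeness of \theoryf{Grz.2} for finite directed partial orders and then perform frame surgery to replace the countermodel's frame $F$ by a finite lattice admitting a truth-preserving map onto it. The gap is in the surgery itself. The Dedekind--MacNeille completion (or the lattice of ideals or antichains) does not in general admit a surjective p-morphism onto $F$, and the map you sketch---sending an ideal to a chosen maximal element---violates the forth condition, since comparable ideals can have incomparable chosen representatives. Concretely, take $F=\{w,a,b,c,d,g\}$ with $w$ the bottom, $g$ the top, $a,b$ incomparable, $c,d$ incomparable, and $a\leq c$, $a\leq d$, $b\leq c$, $b\leq d$; this is finite and directed but not a lattice. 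The completion inserts a new element $e$ with $a\leq e$, $b\leq e$, $e\leq c$, $e\leq d$, and any p-morphism extending the identity on the copy of $F$ must send $e$ to an element of $F$ lying above both $a$ and $b$ and below both $c$ and $d$---no such element exists. So the ``embed into a completion'' route breaks down exactly at the obstacle you yourself flag, and your ``I expect the resolution to mirror\dots'' is precisely the missing step.

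The paper resolves it by going in the opposite direction: rather than adjoining new meets and joins \emph{below} existing worlds, it unravels $F$ above $u_0$ into the tree of paths (maximal chains) and then re-attaches the greatest element $g$ of $F$ as a single top, producing a \emph{baled tree}. A finite baled tree is automatically a lattice: the predecessors of a tree node form a chain, so meets exist, and two incomparable tree nodes have only $g$ as a common upper bound, so joins exist. The projection $\langle u,t\rangle\mapsto u$ is a bisimulation (equivalently, a surjective p-morphism), so the failure of $\varphi$ transfers to the baled tree. If you replace your completion step with this partial tree unraveling, as in lemma~\ref{Lemma.not-part-of-Grz.2-fails-in-baled-tree}, the rest of your argument goes through unchanged.
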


It is immediate to show that \theoryf{Grz.2} is sound with respect to the class of all finite lattices. Indeed, every assertion provable in \theoryf{Grz.2} must be valid in every finite directed partial order, and hence every finite lattice in particular. It therefore suffices to show that \theoryf{Grz.2} is complete with respect to the set of all finite lattices.

To be complete with respect to a class of frames is a less severe requirement than to be complete with respect to its proper subclass. And so, in order to show the completeness of \theoryf{Grz.2} with respect to lattices, we will demonstrate the completeness for the class of all baled trees.

\medskip

A \emph{baled tree} is a partially ordered set $T$ that has the greatest element $t \in T$, with the property that when this element is removed, the remaining set $T \setminus \singleton{t}$ is a tree. The idea behind a baled tree is to imagine an upward-growing tree with its topmost part gathered together and tied into a bale. The term was coined by Joel David Hamkins and Benedikt {\Lowe} in~\cite{HamkinsLoewe2008:TheModalLogicOfForcing}.

\medskip

The following lemma completes the proof of theorem~\ref{Theorem.Grz.2-is-characterized-by-finite-lattices}.

\begin{lemma}\label{Lemma.not-part-of-Grz.2-fails-in-baled-tree}
A modal assertion that is not provable in the modal logic \theoryf{Grz.2} has to fail in some Kripke model whose Kripke frame is a finite baled tree, and hence a finite lattice.
\end{lemma}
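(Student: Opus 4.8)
The plan is to start from a failure of $\varphi$ on a finite directed partial order and massage the frame into a finite baled tree while preserving the failure. By the Fine--style filtration/finite-frame results already invoked in the excerpt, $\theoryf{Grz.2}$ is characterized by finite directed partial orders (antiwellfounded, fatness one, directed), so there is a finite directed partial order $F$, a model $M$ on $F$, and a world $w_0$ with $(M,w_0)\not\satisfies\varphi$. Since $\varphi$ has only finitely many propositional variables, say $n$ of them, the \emph{modal depth} of $\varphi$ is some finite $d$, and the truth value of $\varphi$ at $w_0$ depends only on the submodel generated by $w_0$ up to depth $d$; so I may assume $F$ is generated by $w_0$, i.e. $w_0$ is the root (least element), and $F$ is a finite rooted directed partial order.

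Next I would perform the standard \emph{tree unravelling} of $F$ from $w_0$: the nodes of the unravelled frame $\widetilde F$ are the $\leq_F$-increasing finite sequences $w_0\leq w_1\leq\cdots\leq w_k$ starting at $w_0$, ordered by the initial-segment relation, and the valuation pulls back along the endpoint map $\pi(w_0,\dots,w_k)=w_k$. The endpoint map is a p-morphism, so $(\widetilde F,\text{root})\not\satisfies\varphi$ as well, and $\widetilde F$ is a tree. But $\widetilde F$ is infinite in general and, more seriously, need \emph{not} validate $\axiomf{.2}$ (directedness is destroyed). The fix is to truncate: keep only sequences of length $\leq d$ (bounded modal depth of $\varphi$ suffices to preserve the failure of $\varphi$), obtaining a finite tree $T_0$ with root $w_0$, still refuting $\varphi$ at the root; then \emph{bale} it by adjoining a single new top element $\hat 1$ above everything, with valuation at $\hat 1$ copied from any fixed $\leq_F$-maximal world of $F$ (equivalently, from a fixed leaf of the original directed order). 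Call the result $T$. By construction $T\setminus\{\hat 1\}=T_0$ is a finite tree, so $T$ is a finite baled tree, hence (as noted in the excerpt) a finite lattice.

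It remains to check two things. First, $T$ validates $\theoryf{Grz.2}$: it is reflexive, transitive, antiwellfounded (finite), and directed, since any two nodes share the common upper bound $\hat 1$; so $T$ is a finite directed partial order and $\theoryf{Grz.2}$ is sound on it. Second, $(T,w_0)\not\satisfies\varphi$. Here I would argue that the natural map $e$ from the depth-$\leq d$ part of $T$ down into the unravelling $\widetilde F$ (and thence to $F$), together with the baling node sitting above a copy of a maximal world, is a ``bounded p-morphism'': it commutes with the accessibility relation on the relevant initial segment and respects the valuation, and therefore $\varphi$, having modal depth $\leq d$, has the same truth value at $w_0$ in $T$ as in $F$. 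The delicate point — and the step I expect to be the main obstacle — is precisely this last compatibility check: adding the bale $\hat 1$ introduces new accessibility edges from every node of $T_0$ to $\hat 1$, and I must verify these do not create a subformula of $\varphi$ that suddenly becomes true (or false) at $w_0$; this is handled by (i) choosing $\hat 1$'s valuation to agree with a world of $F$ that is $\leq_F$-above the images of all nodes of $T_0$ used within depth $d$ — using antiwellfoundedness and finiteness of $F$ to find such a common upper world, or more carefully iterating the construction rank by rank — and (ii) invoking the bounded-morphism lemma. I would isolate this as a short sublemma (``baling a finite tree over a directed order preserves $\not\satisfies\varphi$ at the root up to the modal depth of $\varphi$'') and prove it by induction on subformulas of $\varphi$, which is the routine part once the frame-theoretic setup is in place.
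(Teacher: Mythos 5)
Your overall strategy---unravel a finite directed partial order into a tree and then bale it---is the same as the paper's, but the execution has a genuine gap at the truncation step. You unravel using $\leq_F$-increasing sequences (which, as you note, gives an infinite tree because of reflexivity) and then propose to cut off at the modal depth $d$ of $\varphi$, claiming this preserves the failure of $\varphi$ at the root. That claim is the standard depth-truncation argument for the basic modal logic $\theoryf{K}$, and it fails for transitive frames: in the unravelled order, a single $\possible$ can jump from the root directly to a node of any depth, so evaluating a depth-$d$ formula at the root still requires formulas of depth $d-1$ to be evaluated correctly at the truncated leaves, which in turn depends on the successors you have just deleted. Concretely, if $p$ holds at $u$ and $u <_F v$ with $q$ holding at $v$ but at no maximal world, then $\possible(p\land\necessary\neg q)$ is false at the root of the full unravelling but becomes true after truncating at the level of a leaf whose endpoint is $u$; your baling node does not repair this, since its valuation is copied from a maximal world. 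The same defect resurfaces in your final ``bounded p-morphism'' check: at a truncated leaf with endpoint $u$, the back condition requires a successor in $T$ mapping to each $u'\geq_F u$, and after truncation only $u$ itself and the bale are available.

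The paper sidesteps all of this by unravelling along \emph{maximal chains} in the intervals $[u_0,u]$ rather than along arbitrary non-decreasing sequences. Since $F$ is a finite partial order, there are only finitely many such chains, so the unravelling is finite with no truncation needed, and the endpoint map is an honest bisimulation (every $u'\geq_F u$ is reached by extending the chain through $u'$), preserving \emph{all} modal truths rather than only those up to a fixed depth. The bale is likewise not a freshly adjoined point: the paper uses the greatest element $g$ of $F$ itself (which exists because $F$ is finite and directed), with its own valuation and its genuine accessibility, so no compatibility sublemma about the new top edges is required. If you replace your non-strict sequences with maximal chains and your adjoined $\hat 1$ with the existing top of $F$, your argument becomes correct and coincides with the paper's.
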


\begin{proof}
    We follow the ideas developed in~\cite[Lemma 6.5]{HamkinsLoewe2008:TheModalLogicOfForcing}. The logic \theoryf{Grz.2} is characterized by the class of all finite directed partial orders. Therefore, if \theoryf{Grz.2} does not prove an assertion $\varphi$, then there is a Kripke model $N$ based on a finite directed partially ordered Kripke frame $F$, together with a world $u_0$ in it at which $\varphi$ fails. We will construct a Kripke model $M$ that is a finite baled tree and bisimilar to $N$. We use the technique of \emph{partial tree unraveling}, cf. the proof of~\cite[Lemma 6.5]{HamkinsLoewe2008:TheModalLogicOfForcing}.

    We say that $t$ is a \emph{path} from $u_0$ to $u$ in $F$ if and only if it is a maximal chain in the interval $[u_0, u]$.
    Let
    \begin{equation*}
        T = \set{t \mid \text{there is some $u$ such that $t$ is a path from $u_0$ to $u$}}
    \end{equation*}
    endowed with a partial order $\leq_T$ where $t_0 \leq_T t_1$ if and only if $t_1$ is an end-extension of $t_0$, so that $T$ forms a tree.
    Let $B$ be the partial tree unraveling of $F$.
    Specifically, since $F$ is finite and directed it has a greatest element $g$.
    Let $B$ consist of $g$ together with all pairs $\langle u,t\rangle$ where $t$ is a path from $u_0$ to $u$.
    Define $\langle u,t\rangle \leq_B \langle u',t'\rangle$ if and only if $u \leq_F u'$ and $t \leq_T t'$, and additionally stipulate that $\langle u,t\rangle \leq_B g$ for every pair $\langle u,t\rangle\in B$.
    Observe that $B$ forms a baled tree, with $g$ being its greatest element.

    We now construct the desired Kripke model $M$ based on $B$. Namely, we copy the values of propositional variables from each world $u \in N$ to all its copies $\<u,t>$ in $B$, and we let the distinguished node $g$ of $B$ inherit the valuation of $g$ in $N$. Each world $\<u,t>$ in $M$ accesses exactly those copies $\<u',t'>$ corresponding to accessible worlds $u'\geq_F u$ whose witnessing paths $t'$ end-extend $t$; in particular, this yields a bisimulation between $N$ and $M$ via the natural ``copy'' relation. As a result, every world in $M$ has exactly the same modal truths as its copies in $N$.
    
    Consequently, $\varphi$ fails in $M$ at a world that is a copy of $u_0$. Thus, $\varphi$ fails in a Kripke model with a Kripke frame that is a finite baled tree, and hence a lattice.
\end{proof}

The results of this section are summarized by the following theorem. We will expand on it in section~\ref{Section.Grz.2-complete-for-finite-Boolean-algebras}.

\begin{theorem}\label{Theorem.Grz-2-characterization-partial}
    The following sets of finite Kripke frames characterize the modal logic $\theoryf{Grz.2}$.
    \begin{enumerate}
        \item Finite directed partial orders.
        \item Finite lattices.
        \item Finite baled trees.
    \end{enumerate}
\end{theorem}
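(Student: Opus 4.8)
The plan is to observe that Theorem~\ref{Theorem.Grz-2-characterization-partial} is essentially a bookkeeping corollary that collects the facts already established, together with one soundness direction that needs to be spelled out. Recall that the overall claim ``$S$ characterizes $\theoryf{Grz.2}$'' means two things: (soundness) every theorem of $\theoryf{Grz.2}$ is valid on every frame in $S$, and (completeness) every non-theorem of $\theoryf{Grz.2}$ fails on some frame in $S$. For item~(1), both directions are standard and are invoked repeatedly in the preceding proofs: it is the classical fact, attributed above to the methods of Fine, that $\theoryf{Grz.2}$ corresponds to — and in particular is characterized by — the class of finite directed partial orders. I would simply cite this as the base case.

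Next I would handle items~(2) and~(3) by reducing to~(1). For completeness, Lemma~\ref{Lemma.not-part-of-Grz.2-fails-in-baled-tree} already does all the work: a non-theorem of $\theoryf{Grz.2}$ fails in a Kripke model whose frame is a finite baled tree, and a finite baled tree is in particular a finite lattice. This simultaneously gives the completeness half of~(2) and of~(3). For soundness, the point is the chain of inclusions between the relevant classes of frames: every finite baled tree is a finite lattice, and every finite lattice is a finite directed partial order (it has a greatest element, hence is directed). Since validity of a modal assertion on a class of frames is monotone downward under passing to a subclass, soundness of $\theoryf{Grz.2}$ with respect to finite directed partial orders — the easy half of~(1) — immediately yields soundness with respect to finite lattices and with respect to finite baled trees. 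Assembling: for each of the three classes, soundness comes from~(1) via the subclass relation, and completeness comes from Lemma~\ref{Lemma.not-part-of-Grz.2-fails-in-baled-tree} (directly for baled trees, and a fortiori for lattices and for directed partial orders, since a finite baled tree is an instance of all three). That closes all three items.

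There is really no hard step here; the only thing requiring a word of care is making explicit why the three classes are nested as claimed — in particular that a finite lattice, viewed as a Kripke frame with its order relation, is directed (join of any two elements witnesses directedness, and finiteness gives a greatest element) — and why this nesting is exactly what licenses transporting soundness down and completeness up. I would state these two monotonicity observations in one sentence each and then present the conclusion as a short table of implications. If anything counts as the ``main obstacle'', it is purely expository: ensuring the reader sees that Theorem~\ref{Theorem.Grz.2-is-characterized-by-finite-lattices} (already proved) is the case~(2) of this theorem, so that Theorem~\ref{Theorem.Grz-2-characterization-partial} genuinely is just a summary, with~(3) being the strengthening actually delivered by the baled-tree construction and~(1) being the known starting point.

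\begin{proof}
    Every finite baled tree is a finite lattice, and every finite lattice, regarded as a Kripke frame under its partial order, is a finite directed partial order: any two elements have an upper bound, namely their join, and by finiteness there is a greatest element. Since the validity of a propositional modal assertion on a class of frames passes to every subclass, the soundness of $\theoryf{Grz.2}$ with respect to the class of finite directed partial orders — which is part of the known correspondence in item~(1) — yields at once the soundness of $\theoryf{Grz.2}$ with respect to finite lattices and with respect to finite baled trees.

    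For completeness, suppose $\varphi$ is not provable in $\theoryf{Grz.2}$. By lemma~\ref{Lemma.not-part-of-Grz.2-fails-in-baled-tree}, $\varphi$ fails in a Kripke model whose underlying frame is a finite baled tree. As just noted, a finite baled tree is in particular a finite lattice and a finite directed partial order. Hence $\varphi$ also fails on some frame in each of the three classes. Combining soundness and completeness, each of the classes in items~(1), (2), and~(3) characterizes $\theoryf{Grz.2}$.
\end{proof}
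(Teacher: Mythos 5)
Your proof is correct and follows essentially the same route the paper takes: soundness is inherited from validity on finite directed partial orders via the subclass inclusions (baled tree $\subseteq$ lattice $\subseteq$ directed partial order), and completeness for all three classes comes in one stroke from Lemma~\ref{Lemma.not-part-of-Grz.2-fails-in-baled-tree}. This matches the argument the paper spells out for the extended four-item version of this theorem in section~\ref{Section.Grz.2-complete-for-finite-Boolean-algebras}.
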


\section{Model and frame labelings}

The constructions of this section lie at the heart of the control statement theory, serving as the primary technique of simulating modal truth of one Kripke model in the other.

\medskip

Let us start by introducing a new notion of Kripke model labeling. Suppose $M$ and $N$ are Kripke models with initial worlds $w_0$ and $u_0$. A \emph{model labeling} of $(M,w_0)$ for $(N,u_0)$ is a truth-preserving assignment of an assertion $\psi_p$ to each propositional variable $p$ in the propositional modal language. In other words, there exists an assignment $p\mapsto\psi_p$ such that for any modal assertion $\varphi$, we have
    \begin{equation*}
        (M,w_0) \satisfies \varphi(p_0,\ldots,p_k) \quad \text{if and only if} \quad (N,u_0) \satisfies \varphi(\psi_{p_0},\ldots,\psi_{p_k}).
    \end{equation*}

The point of model labeling is that whenever an assertion $\varphi$ is false at $(M,w_0)$, a suitable substitution instance of $\varphi$ is false at $(N,u_0)$. Consequently, the propositional modal validities of the latter are contained in those of the former. Throughout this article we work with pointed frames and models. When analyzing the modal assertions true (or valid) at a world $w$, we may replace the ambient frame or model by the subframe or submodel generated by $w$, and hence treat $w$ as an initial world without loss of generality. The theory of a pointed model $(N,u_0)$ is the set of modal assertions true at $u_0$ in $N$.

\newtheorem*{ModelFrameLabelingLemma}{Model Labeling Lemma}%
\begin{ModelFrameLabelingLemma}
\makeatletter\def\@currentlabelname{model labeling lemma}\makeatother\label{ModelFrameLabelingLemma}
Suppose $F$ is a Kripke frame with an initial node $w_0$ and $N$ a Kripke model with an initial world $u_0$.
\begin{enumerate}
  \item If for every Kripke model $M$ based on $F$ there is a model labeling of $(M,w_0)$ for $(N,u_0)$, then the modal logic of the underlying frame of $N$ is contained in the modal logic of $F$.
  \item If there is a Kripke model $M$ based on $F$ such that there is a model labeling of $(N,u_0)$ for $(M,w_0)$, then the modal logic of $F$ is contained in the theory of $(N,u_0)$.
\end{enumerate}
\end{ModelFrameLabelingLemma}

\begin{proof}
    Let us tackle statement (1) first, arguing by contrapositive. If $\varphi$ is not valid in $F$, then there exists a Kripke model $M$ based on $F$ and an initial world $w_0$ with $(M,w_0)\not\satisfies \varphi(p_0,\ldots,p_k)$. By assumption, there is a model labeling $p \mapsto \psi_p$ of $(M,w_0)$ for $(N,u_0)$, whence $(N,u_0)\not\satisfies \varphi(\psi_{p_0},\ldots,\psi_{p_k})$. Thus, some substitution instance of $\varphi$ fails at $(N,u_0)$. Since the logic of the underlying frame of $N$ is closed under uniform substitution, it follows that $\varphi$ is not valid on that frame. Hence the modal logic of the frame of $N$ is contained in the modal logic of $F$.

    For statement (2), there is a Kripke model $M$ based on $F$ and a labeling $p \mapsto \psi_p$ of $(N,u_0)$ for $(M,w_0)$ such that for every modal assertion $\varphi$,
    \begin{equation*}
        (N,u_0) \satisfies \varphi(p_0,\ldots,p_k) \quad \text{if and only if} \quad (M,w_0) \satisfies \varphi(\psi_{p_0},\ldots,\psi_{p_k}).
    \end{equation*}
    Suppose $\varphi$ is valid in $F$. By closure of frame validity under uniform substitution, $\varphi(\psi_{p_0},\ldots,\psi_{p_k})$ is valid in $F$, hence $(M,w_0)\satisfies \varphi(\psi_{p_0},\ldots,\psi_{p_k})$. By the displayed equivalence, $(N,u_0)\satisfies \varphi$. Thus the modal logic of $F$ is contained in the set of assertions true at $(N,u_0)$, that is the theory of $(N,u_0)$.
\end{proof}
    
A \emph{frame labeling} of a Kripke model, on the other hand, involves labeling its underlying Kripke frame. Specifically, a labeling of a Kripke frame $F$ for $(N,u_0)$, where $N$ is a Kripke model and $u_0$ is an initial world in it, is an assignment to each node $w$ in $F$ an assertion $\Phi_w$ in the propositional modal language, such that
\begin{enumerate}
    \item $(N,u_0) \satisfies \Phi_{w_0}$, where $w_0$ is a given initial node of $F$.
    \item For every world $u$ accessible from $u_0$, if $(N,u) \satisfies \Phi_w$, then
    $(N,u) \satisfies \possible \Phi_{w'}$ if and only if $w \leq_F w'$.
    \item For every world $u$ accessible from $u_0$, there is exactly one $\Phi_w$ such that $(N,u) \satisfies \Phi_w$.
\end{enumerate}

If a Kripke frame $F$ is finite, the conditions (1)--(3) are expressible in the language of propositional modal logic by the \emph{Jankov-Fine} formula for Kripke frame $F$:
\begin{equation*}
    p_{w_0} \land \necessary \big( \bigvee_{w \in F} p_w \land \bigwedge_{w \neq v} (p_w \implies \neg p_v) \land \bigwedge_{w \leq v} (p_w \implies \possible p_v) \land \bigwedge_{w \not\leq v} (p_w \implies \neg \possible p_v) \big).
\end{equation*}

Jankov-Fine formulas were used in~\cite{HamkinsLoewe2008:TheModalLogicOfForcing} to establish the upper bounds on the modal logic of forcing. And~\cite{HamkinsLeibmanLoewe2015:StructuralConnectionsForcingClassAndItsModalLogic} abstracted the idea out with the notion of a frame labeling.

\medskip

Next, let me demonstrate that from a labeling of a finite frame, we can derive a model labeling for every model based on that frame.

\newtheorem*{FrameLabelingLemma}{Frame Labeling Lemma}%
\begin{FrameLabelingLemma}
\makeatletter\def\@currentlabelname{frame labeling lemma}\makeatother\label{FrameLabelingLemma}
    Suppose $F$ is a finite Kripke frame with initial node $w_0$, and $(N,u_0)$ is a pointed Kripke model. If there is a frame labeling of $F$ for $(N,u_0)$, then for every Kripke model $M$ based on $F$ there exists a model labeling of $(M,w_0)$ for $(N,u_0)$. Consequently, the modal logic of the underlying frame of $N$ is contained in the modal logic of $(F,w_0)$.
\end{FrameLabelingLemma}

\begin{proof}
    The proof closely follows~\cite{HamkinsLeibmanLoewe2015:StructuralConnectionsForcingClassAndItsModalLogic}, although that work was exclusively about models of set theory under forcing interpretation of modality.

    Let $w \mapsto \Phi_w$ be a labeling of a Kripke frame $F$ for $(N,u_0)$, where $N$ is a Kripke model and $u_0$ an initial world in it. Let $M$ be any Kripke model based on $F$. We recover the model labeling $p \mapsto \psi_p$ from the frame labeling by letting $\psi_p$ be the disjunction of all statements $\Phi_w$ associated with each node $w$ in $F$ whose valuation of $p$ at $M$ is true. In other words, we view each $w \in F$ as a propositional world in $M$ and let
    \begin{equation*}
        \psi_p = \bigvee \set{\Phi_w \mid (M,w) \satisfies p}.
    \end{equation*}
    To see that $p \mapsto \psi_p$ is indeed a model labeling, it suffices to show simultaneously for all worlds $w$ and all worlds $u$ accessible from $u_0$, by induction on the complexity of formulas, that if $(N,u) \satisfies \Phi_w$, then
    \begin{equation*}
        (M,w) \satisfies \varphi(p_0,\ldots,p_k) \quad \text{if and only if} \quad (N,u) \satisfies \varphi(\psi_{p_0},\ldots,\psi_{p_k}).
    \end{equation*}
    This is the uniform version of the model labeling. It is clear that the claim follows from it.

    The atomic case is immediate from the definition of $\psi_p$. The case of Boolean combinations is routine. Let us consider the modal operator case. Look at the forward direction first. Suppose $(M, w) \satisfies \possible \varphi$, so there is a world $w^+ \geq_F w$ such that $(M, w^+) \satisfies \varphi$. Let $u$ be any world in $N$ accessible from $u_0$ such that $(N,u)\satisfies \Phi_w$. Since $w \leq_F w^+$, by the definition of frame labeling we have $(N,u)\satisfies \possible \Phi_{w^+}$, so there is a world $u^+$ accessible from $u$ with $(N,u^+)\satisfies \Phi_{w^+}$. By the induction hypothesis, $(N,u^+)\satisfies \varphi(\psi_{p_0},\ldots,\psi_{p_k})$. Hence $(N,u)\satisfies \possible \varphi(\psi_{p_0},\ldots,\psi_{p_k})$, as desired.

    For the converse direction, suppose $(N,u) \satisfies \possible \varphi(\psi_{p_0},\ldots,\psi_{p_k})$. Then there is a world $u^+$ in $N$ accessible from $u$ such that $(N,u^+) \satisfies \varphi(\psi_{p_0},\ldots,\psi_{p_k})$. By the definition of frame labeling, $u^+$ has a corresponding label that it satisfies, say $(N,u^{+}) \satisfies \Phi_{w^+}$. From this and the induction hypothesis, we get that $(M,w^{+}) \satisfies \varphi$. Since $u$ accesses $u^+$, we have $(N,u)\satisfies \possible \Phi_{w^+}$, and so, by the definition of the frame labeling again, we must have that $w \leq_F w^+$. Thus, $(M,w) \satisfies \possible \varphi$, and we are done.
\end{proof}

Let me end this section with a digression. In~\cite{HamkinsLeibmanLoewe2015:StructuralConnectionsForcingClassAndItsModalLogic}, the authors suggested that Jankov-Fine formulas and frame labelings are equivalent. In light of the hypothesis of the \nameref{FrameLabelingLemma}, one might argue that my emphasis on it being the case only when the frame is finite is overly particular. 

However, let me address this by suggesting a template for variants of the \nameref{FrameLabelingLemma} that would work for infinite Kripke frames. The finiteness of the Kripke frame guarantees that the disjunction is finite, and thus \(\psi_p\) is an assertion in the language of propositional modal logic. Therefore, it is the property of the language in which we express the labels that necessitates that the frame be finite, rather than the Kripke frame itself, the Kripke models based on that frame, or the underlying language of these models.

Therefore, if the language we use to label allowed for infinitary disjunctions of the same size as the Kripke frame, then \(\psi_p\) would be expressible in that language, and the argument goes through.

\section{Control statement theory for Grzegorczyk logics}\label{Section.Control-statement-theory}

Control statement theory has been an enormously successful methodology in establishing the upper bounds on the propositional modal validities of a modal system. It emerged in a series of papers on the modal logics of set theory and arithmetic~\cite{HamkinsLoewe2008:TheModalLogicOfForcing},\cite{HamkinsLeibmanLoewe2015:StructuralConnectionsForcingClassAndItsModalLogic}, \cite{HamkinsLinnebo:Modal-logic-of-set-theoretic-potentialism}, \cite{HamkinsWilliams:The-universal-finite-sequence}, \cite{HamkinsWoodin:The-universal-finite-set}, \cite{Hamkins:The-modal-logic-of-arithmetic-potentialism}.

With this method, one uses the existence of
various kinds of control statements in the modal system---buttons, switches,
dials, ratchets, or tree labelings---to establish upper bounds on the class of propositional modal validities of the system. Together with the results of this article, we can summarize the state-of-the-art as follows.

\begin{table}[h]
\begin{adjustbox}{center}
\begin{tabular}{@{}ll@{}}
\toprule
\multicolumn{1}{c}{Upper bounds}  & \multicolumn{1}{c}{Control statements} \\
\cmidrule(l){1-2}
\multicolumn{1}{c}{Modal logic} & \multicolumn{1}{c}{Arbitrarily large finite families of independent} \\
\cmidrule(l){1-2}
\multicolumn{1}{c}{S4.3} & ratchets and switches \\
\multicolumn{1}{c}{S4.2} & buttons and switches \\
\multicolumn{1}{c}{Grz.3} & ratchets \\
\multicolumn{1}{c}{Grz.2} & buttons \\
\cmidrule(l){2-2}
& \multicolumn{1}{c}{Labelings of every finite regular} \\
\cmidrule(l){2-2}
\multicolumn{1}{c}{S4}   & pre-tree \\
\multicolumn{1}{c}{Grz}   & tree \\
\bottomrule
\end{tabular}
\end{adjustbox}
\caption{Correspondence between the upper bounds on the propositional modal validities and control statements.}
  \label{Table.upper-bounds}
\end{table}

There is an intriguing history behind these new results. Joel David Hamkins and I had conjectured that buttons suffice to bound the propositional modal validities by \theoryf{S4.2}~\cite{Hamkins2019:X-buttons-suffice}. However, Hamkins retracted this claim shortly after at the Wijsgerig Festival DRIFT in Amsterdam. Meanwhile, a MathOverflow discussion has emerged, which eventually led to a concrete counterexample~\cite{Passmann19-are-buttons-really-enough-to-bounds-validities-by-S4.2}. And so, the question that remained was what happens in the absence of switches?

\begin{question}\label{Question.No-switches}
    Suppose $w$ is an initial world in a Kripke model $M$. What are the upper bounds on the propositional modal validities at $w$ in $M$ under the following conditions:
    \begin{enumerate}
        \item If $w$ admits ratchets of arbitrarily large finite lengths?
        \item If $w$ admits arbitrarily large finite families of independent buttons?
        \item If $w$ admits a labeling for every finite tree?
    \end{enumerate}
\end{question}

Let us now proceed to demonstrate that the answer to each case in question~\ref{Question.No-switches} is precisely as conjectured in table~\ref{Table.upper-bounds}. By doing so, we demonstrate the~\nameref{UpperBoundsTheorem} stated at the beginning of the article.

\subsection{Control statement theory for the modal logic \normalsize G\scriptsize RZ}

In \cite{Hamkins:The-modal-logic-of-arithmetic-potentialism}, Hamkins established that a labeling of every finite pre-tree yields the modal logic \theoryf{S4} as the upper bounds on propositional modal validities. I shall now demonstrate that having a labeling for every finite tree results in the upper bounds constituting the modal logic \theoryf{Grz}. And in fact, it suffices to consider only regular finite trees.

\medskip

The modal logic \theoryf{Grz} corresponds to the class of all antiwellfounded partial orders, that is reflexive and transitive frames of fatness one satisfying converse wellfoundedness. Because it has the finite frame property, it is characterized by the set of all finite partial orders. A standard application of the tree unraveling technique, as detailed in the proof of~\cite[Theorem 2.19]{ChagrovZakharyaschev1997:ModalLogic}, shows that \theoryf{Grz} is also characterized by the set of all finite trees. Furthermore, by introducing dummy copies of the worlds during the unraveling process, one can ensure that the resulting tree is regular.

The theorem below is optimal. The failure of labeling for a particular regular finite tree $T$ is expressible in the language of propositional modal logic and is not provable in the modal logic \theoryf{Grz}.

\begin{theorem}\label{Theorem.railyard-then-upper-bound-Grz}
    Suppose $u_0$ is an initial world in a Kripke model $N$ that admits a labeling for every regular finite tree $T$. Then, the propositional modal assertions valid at $u_0$ are contained in the modal logic \theoryf{Grz}.
\end{theorem}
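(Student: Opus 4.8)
The plan is to use the \nameref{ModelFrameLabelingLemma} together with the finite frame property for \theoryf{Grz}. By the remarks preceding the theorem, \theoryf{Grz} is characterized by the class of all regular finite trees; so to show that the validities at $u_0$ are contained in \theoryf{Grz}, it suffices to show that whenever a modal assertion $\varphi$ fails at some world in some Kripke model based on a regular finite tree $T$, then $\varphi$ fails at $u_0$ in $N$. Concretely, I would argue by contrapositive: if $\varphi \notin \theoryf{Grz}$, pick a regular finite tree $T$, a model $M$ based on $T$, and a world witnessing the failure of $\varphi$; by passing to the generated submodel I may take this world to be the root, so $\varphi$ fails at $(M, r)$ where $r$ is the root of $T$.

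Next I would invoke the hypothesis: since $u_0$ admits a labeling for every regular finite tree, in particular there is a frame labeling $w \mapsto \Phi_w$ of $T$ (with root $r$) for $(N, u_0)$. Now apply the \nameref{FrameLabelingLemma}: from this frame labeling of the finite frame $T$ we obtain, for the specific model $M$ based on $T$, a model labeling $p \mapsto \psi_p$ of $(M, r)$ for $(N, u_0)$. By the defining property of a model labeling, $(M,r) \not\satisfies \varphi(p_0,\ldots,p_k)$ gives $(N, u_0) \not\satisfies \varphi(\psi_{p_0},\ldots,\psi_{p_k})$, so a substitution instance of $\varphi$ fails at $(N,u_0)$. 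Hence $\varphi$ is not valid at $u_0$, which is the required contrapositive.

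The routine bookkeeping is: checking that the generated-submodel reduction preserves modal truth (standard), and confirming that "regular finite tree" in the hypothesis of the theorem matches the class used in the completeness statement for \theoryf{Grz} quoted just above the theorem — this is exactly why the earlier paragraph inserted dummy copies to make the unraveling regular. The one genuinely load-bearing external input is the characterization of \theoryf{Grz} by regular finite trees (via tree unraveling of finite partial orders, cf.~\cite[Theorem 2.19]{ChagrovZakharyaschev1997:ModalLogic}); everything else is a direct composition of the two labeling lemmas. I expect the main obstacle, such as it is, to be purely a matter of matching conventions: ensuring the "initial node" of $T$ in the frame-labeling hypothesis is the tree root used in the completeness theorem, and that the notion of labeling quantified over in the theorem statement is the frame labeling of the \nameref{FrameLabelingLemma} (conditions (1)–(3), equivalently the Jankov–Fine formula of $T$), rather than a model labeling. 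Once those identifications are pinned down, the proof is essentially a two-line invocation.
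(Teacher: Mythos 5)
Your proposal is correct and follows essentially the same route as the paper's proof: invoke the characterization of \theoryf{Grz} by regular finite trees to get a countermodel $M$ based on such a tree, then combine the hypothesis with the \nameref{FrameLabelingLemma} to obtain a model labeling of $M$ for $(N,u_0)$, so that a substitution instance of $\varphi$ fails at $u_0$. The extra bookkeeping you flag (generated submodels, matching the notion of labeling) is handled by the paper's standing conventions on pointed models and adds nothing beyond what the paper's argument already contains.
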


\begin{proof}
    Suppose \theoryf{Grz} does not prove an assertion $\varphi$. The modal logic \theoryf{Grz} is characterized by the set of all finite regular trees. So, there is a Kripke model $M$ with an initial world $w_0$ based on a regular finite tree such that $w_0$ does not satisfy $\varphi$ in $M$. By the hypothesis and the~\nameref{FrameLabelingLemma}, there is a model labeling of $M$ for $u_0$ in $N$. That is, there is an assignment of propositional variables to modal assertions $p \mapsto \psi_p$ such that
    \begin{equation*}
        (M,w_0) \satisfies \varphi(p_0,\ldots,p_k) \quad \text{if and only if} \quad (N, u_0) \satisfies \varphi(\psi_{p_0}, \ldots, \psi_{p_k}).
    \end{equation*}
     We have that $w_0$ does not satisfy the assertion $\varphi(p_0,\ldots,p_k)$ in $M$, and so $u_0$ does not satisfy $\varphi(\psi_{p_0}, \ldots, \psi_{p_k})$ in $N$. That means $\varphi$ cannot be valid at $u_0$. And since $\varphi$ was chosen arbitrarily, every modal assertion not provable in $\theoryf{Grz}$ cannot be valid at $u_0$ in $N$.
\end{proof}

We have resolved case (3) of question~\ref{Question.No-switches}. Before addressing the second case, I should like to clarify that removing the requirement for switches in the control statements giving the upper bounds of \theoryf{S4}, specifically from pre-tree labelings, leads to the upper bounds of \theoryf{Grz}.

I claim that arbitrary finite pre-tree labelings are effectively arbitrary finite regular tree labelings independent of arbitrarily large finite families of independent switches. Let me make this more precise.

A \emph{pre-tree} is derived from a tree by replacing each node with a cluster of one or more equivalent nodes. A Kripke model based on a finite pre-tree is bisimilar to a finite \emph{regular} pre-tree, by which I mean that each cluster is of the same finite size and the induced finite tree is regular.

Thus, since all clusters in a regular pre-tree are of the same size, labeling the regular finite pre-tree is equivalent to labeling the induced tree and separately labeling the nodes within the cluster (all clusters are actually copies of each other).

A \emph{switch} is a statement such that both $\possible \varphi$ and $\possible \neg \varphi$ are necessarily true. A family of \emph{independent} switches means that its switches do not interfere with each another, meaning that flipping the switches in a subfamily does not flip the switches outside of it. Similarly and more generally, two families of control statements are \emph{independent} if they do not interfere with one another.

When all clusters are of size $n$, it is essentially a bunch of copies of a single cluster, and having an independent family of $n$ switches is equivalent to labeling the nodes within the cluster. Thus, providing labelings for every finite pre-tree is equivalent to providing labelings for every finite regular tree independent of the simultaneously provided arbitrarily large finite families of independent switches. We can therefore draw the following observation.

\begin{observation}
    Suppose $u_0$ is an initial world in a Kripke model $N$ that admits a labeling for every regular finite tree $T$ independent of an arbitrarily large finite family of independent switches. Then, the propositional modal assertions valid at $u_0$ are contained in the modal logic \theoryf{S4}. 
\end{observation}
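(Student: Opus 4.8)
The plan is to reduce this observation to a combination of the two upper-bound results already established in the excerpt, namely the \nameref{FrameLabelingLemma} together with Hamkins's theorem on finite pre-trees quoted at the start of the subsection. The key insight, which has already been argued informally in the paragraphs preceding the statement, is that a labeling for every regular finite tree $T$ \emph{independent of an arbitrarily large finite family of independent switches} is precisely the data of a labeling for every finite regular pre-tree, and hence (by bisimulation) for every finite pre-tree. So the whole content of the proof is to package that equivalence carefully and invoke the completeness of \theoryf{S4} with respect to finite pre-trees.

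Concretely, I would proceed as follows. First, assume $u_0$ in $N$ admits a labeling for every regular finite tree $T$ independent of an arbitrarily large finite family of independent switches, and suppose toward the contrapositive that \theoryf{S4} does not prove $\varphi$. By the completeness of \theoryf{S4} with respect to finite pre-trees (the tree-unraveling argument cited from \cite{ChagrovZakharyaschev1997:ModalLogic}, with dummy copies to achieve regularity, exactly as in the \theoryf{Grz} case but now allowing clusters), there is a Kripke model $M$ on a finite regular pre-tree $P$ with an initial world $w_0$ at which $\varphi$ fails. Write $P$ as built from an induced regular finite tree $T$ by replacing each node with a cluster of the common size $n$. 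Second, I would turn the hypothesised data into a frame labeling of $P$ for $(N,u_0)$: use the tree-labeling component to produce labels $\Phi_t$ for the nodes $t$ of $T$, and use the independent family of $n$ switches $s_1,\dots,s_n$ to produce, for each node of a cluster, a Boolean combination of the $s_i$ that singles it out within the cluster; the conjunction of the appropriate $\Phi_t$ with the appropriate switch-pattern then labels each node of $P$. Independence of the switch family from the tree labeling is exactly what guarantees conditions (1)--(3) of a frame labeling hold simultaneously. Third, apply the \nameref{FrameLabelingLemma} to the finite frame $P$ to obtain a model labeling of $(M,w_0)$ for $(N,u_0)$, so that the failure of $\varphi$ at $w_0$ transfers to the failure of a substitution instance $\varphi(\psi_{p_0},\dots,\psi_{p_k})$ at $u_0$ in $N$. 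Hence $\varphi$ is not valid at $u_0$; since $\varphi$ was an arbitrary non-theorem of \theoryf{S4}, the validities at $u_0$ are contained in \theoryf{S4}.

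The main obstacle — and the only place genuine care is needed — is the second step: verifying that the switch patterns within a cluster, together with the tree labels, genuinely assemble into a \emph{frame labeling} of the pre-tree in the precise sense of the earlier definition, in particular condition (2) relating $\possible \Phi_{w'}$ to $w \leq_P w'$. One must check that moving within a cluster (where all $n$ switch-states are possible and the tree-label is unchanged) and moving up in $T$ interact correctly: accessibility in $P$ is $\leq_T$ on the tree coordinate with no constraint inside a cluster, and this is matched on the $N$ side precisely because the switches are switches (every pattern is always possible) and the family is independent of the tree labeling (the $\Phi_t$ behave as a genuine tree labeling regardless of switch state). This is the formal content of the slogan ``all clusters are copies of each other,'' and it is essentially the bisimulation argument sketched in the text; I would either spell it out directly or, more economically, observe that a labeling for every finite pre-tree in the sense of \cite{Hamkins:The-modal-logic-of-arithmetic-potentialism} is literally obtained this way and cite that the upper bound \theoryf{S4} follows. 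The remaining steps (completeness of \theoryf{S4} for finite regular pre-trees, and the transfer via the \nameref{FrameLabelingLemma}) are routine given what has already been proved.
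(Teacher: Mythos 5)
Your proposal is correct and follows essentially the same route as the paper, which derives this observation directly from the preceding informal discussion: the identification of finite pre-tree labelings with regular finite tree labelings together with independent switch families, combined with Hamkins's cited result that pre-tree labelings bound the validities by \theoryf{S4}. The paper offers no more formal proof than that discussion, so your explicit assembly of the frame labeling of the pre-tree via the \nameref{FrameLabelingLemma} (and your attention to condition (2) and the cluster/switch interaction) is, if anything, more detailed than the original.
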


As a result, removing the requirement for switches when asserting that the upper bounds on validities are \theoryf{S4} changes the upper bounds to \theoryf{Grz}.

\subsection{Control statement theory for the modal logic \normalsize G\scriptsize RZ\normalsize .2}

It has long been known that arbitrarily large finite families of independent buttons and switches give the upper bounds of \theoryf{S4.2} on the modal system~\cite{HamkinsLoewe2008:TheModalLogicOfForcing}. Let me now argue that arbitrarily large finite families of independent buttons are enough to bound the validities by the modal logic \theoryf{Grz.2}, and that is optimal.

\medskip

A \emph{button} is an assertion that is necessarily possibly necessary. Buttons \(b_i\), for \(i < n\), are independent at an initial world \(u_0\) of a Kripke model \(N\) if none of the buttons is necessary at \(u_0\) and necessarily, any of the buttons can be \emph{pushed} without affecting the other buttons. Formally, for an \(A \subseteq n\), we define \(\Theta_A = \bigwedge_{i \in A} \necessary b_i \land  \bigwedge_{i \in n \setminus A} \neg \necessary b_i\). This asserts that the button pattern is specified exactly by \(A\). The buttons are independent at \(u_0\) if
\begin{equation*}
    (N,u_0) \satisfies \bigwedge_{i < n} \neg \necessary b_i \land \bigwedge_{A \subseteq n} \necessary \big(  \Theta_A \implies \bigwedge_{A \subseteq A^{+} \subseteq n} \possible\Theta_{A^{+}} \big).
\end{equation*}
The buttons are all unpushed at first, at \(u_0\), and necessarily, any larger button pattern is realizable.

\medskip

The lemma below slightly improves upon~\cite[Lemma 7.3]{HamkinsLoewe2008:TheModalLogicOfForcing}, by requiring only as many independent buttons as there are nodes in the Kripke frame, rather than arbitrarily large finite families of independent buttons.

\begin{lemma}\label{Lemma.buttons-and-lattices}
    Suppose $F$ is a Kripke frame that is a finite lattice with an initial node $w_0$ and $N$ a Kripke model that validates the modal logic \theoryf{S4}. If $u_0$ is a world in $N$ that admits at least $(|F|-1)$-many independent buttons, then there exists a frame labeling of $F$ for $u_0$ in $N$.
\end{lemma}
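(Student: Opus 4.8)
The plan is to construct the frame labeling $w \mapsto \Phi_w$ explicitly from the independent buttons, mimicking the strategy of Hamkins--{\Lowe} but economizing on the number of buttons. First I would fix an enumeration of the non-initial nodes of $F$, say $F \setminus \{w_0\} = \{w_1,\ldots,w_{m}\}$ with $m = |F|-1$, and take independent buttons $b_1,\ldots,b_m$ at $u_0$ in $N$, one for each non-initial node. The idea is that the button pattern $A \subseteq \{1,\ldots,m\}$ realized at a world $u$ should encode which node of $F$ that world is simulating: a world will correspond to $w_i$ when its button pattern is exactly the up-set (in the lattice order) of $w_i$ restricted to indices, and correspond to $w_0$ when no buttons are pushed. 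Because $F$ is a lattice, arbitrary intersections of principal up-sets are again principal up-sets, which is precisely what makes this encoding robust: given any realized button pattern $A$, the set of nodes whose principal up-set is contained in the complement $\{1,\ldots,m\}\setminus A$ has a least element (the meet), so there is a well-defined ``current node'' associated to every reachable button pattern. Concretely, I would set $\Phi_{w_0} = \bigwedge_{i} \neg \necessary b_i$ and, for $i \geq 1$, let $\Phi_{w_i}$ assert that the pushed buttons are exactly $\{j : w_i \leq_F w_j\}$, i.e.\ $\Phi_{w_i} = \Theta_{A_i}$ where $A_i = \{\, j : w_i \leq_F w_j \,\}$.

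Next I would verify the three clauses in the definition of a frame labeling. Clause (1), that $(N,u_0)\satisfies\Phi_{w_0}$, is immediate since all buttons start unpushed. Clause (3), that every world $u$ accessible from $u_0$ satisfies exactly one $\Phi_w$, requires showing that whatever button pattern $A$ is realized at $u$ must be of the form $A_i$ for a unique $i$, or empty. This is where the lattice structure does the work: I would argue that the buttons realized at any reachable world always form an up-set closed pattern of the required shape, because pushing buttons can only enlarge the pattern and the independence condition $\necessary(\Theta_A \implies \bigwedge_{A \subseteq A^+} \possible \Theta_{A^+})$ together with the lattice meet guarantees the pattern stabilizes on exactly the principal up-sets. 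Clause (2) is the substantive one: for a world $u$ with $(N,u)\satisfies\Phi_w$, I must show $(N,u)\satisfies\possible\Phi_{w'}$ iff $w \leq_F w'$. The forward-compatible direction (if $w \leq_F w'$ then the pattern $A_{w'} \supseteq A_{w}$ is reachable, hence $\possible\Phi_{w'}$) follows from the independence axiom applied to the larger pattern. The converse direction (if $\possible\Phi_{w'}$ then $w \leq_F w'$) follows because buttons, once pushed, stay pushed (they are necessarily possibly necessary, and $N$ validates \theoryf{S4}, so $\necessary b_j$ is itself a button and persists), so the pattern can only grow, forcing $A_{w} \subseteq A_{w'}$, which by the definition of the $A_i$'s and antisymmetry of $\leq_F$ gives $w \leq_F w'$.

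The main obstacle I anticipate is clause (3) --- proving that no ``spurious'' button pattern, one not of the form $A_i$ or $\emptyset$, can ever be realized at a reachable world. The independence condition only guarantees that from a pattern $A$ one can reach every larger pattern $A^+$; it does not a priori forbid reaching such patterns. The resolution must be that we never need such patterns to be \emph{excluded} from $N$ outright; rather, we should replace each $\Phi_{w_i}$ by a formula that picks out the ``current node'' as the meet $\bigwedge_F \{\, w_j : j \notin A \,\}$ computed from whatever pattern $A$ is realized --- i.e.\ define, for each node $w$, $\Phi_w$ to hold at $u$ exactly when the realized pattern $A$ satisfies $w = \bigwedge_F\{w_j : j \notin A\}$ (with the empty meet being $w_0$, interpreting $F$ as having a top element since it is a finite lattice). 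This is a genuinely well-defined partition of all button patterns into $|F|$ classes, one per node, so clause (3) becomes automatic, and one re-checks clauses (1) and (2) under this refined definition; the lattice identity $\bigwedge(S \cap S') = (\bigwedge S) \wedge (\bigwedge S')$ is exactly what makes the monotonicity in clause (2) go through. I would then invoke the \nameref{FrameLabelingLemma} to conclude the ``consequently'' clause, though as stated the lemma only asserts the existence of the frame labeling, so nothing further is needed here.
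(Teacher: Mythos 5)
Your overall strategy---one independent button per non-initial node, with the current node of $F$ read off from the realized button pattern---is the right one, and you correctly isolate the crux: clause (3) can fail because independence does not prevent ``spurious'' button patterns that are not principal up-sets. But your resolution of that crux is wrong. You associate to a realized pattern $A$ the node $\bigwedge_F\set{w_j \mid j\notin A}$, the meet of the \emph{unpushed} nodes. This already breaks clause (1): at $u_0$ no buttons are pushed, so your formula assigns $u_0$ the meet of \emph{all} non-initial nodes, which need not be $w_0$. In the three-element chain $w_0<_F w_1<_F w_2$ that meet is $w_1$, so $(N,u_0)\satisfies\Phi_{w_1}$ rather than $\Phi_{w_0}$, and in fact no reachable pattern ever yields $\Phi_{w_0}$ at all. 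The ``lattice identity'' $\bigwedge(S\cap S')=(\bigwedge S)\wedge(\bigwedge S')$ that you invoke for clause (2) is also false in general (take $S$ and $S'$ to be disjoint singletons: the left side is the empty meet, i.e.\ the top element). Separately, your first attempt has the monotonicity reversed: if $w\leq_F w'$ then the principal up-set of $w'$ is \emph{contained in} that of $w$, so $A_{w'}\subseteq A_{w}$ rather than $A_{w'}\supseteq A_{w}$; since button patterns can only grow, that encoding could never move upward in $F$.

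The correct repair, and the one the paper uses, is dual to yours: send a realized pattern $S$ to the \emph{join} of the pushed nodes, $\sup S$, with $\sup\emptyset=w_0$ the bottom of the lattice, and set $\Phi_w=\bigvee\set{b_S\mid w=\sup S}$, where $b_S$ asserts that exactly the buttons in $S$ are pushed. Every world reachable from $u_0$ realizes exactly one pattern $S(u)$ and hence exactly one $\Phi_w$ (clause (3)); $S(u_0)=\emptyset$ gives clause (1); patterns only grow along accessibility, so $\possible\Phi_{w'}$ forces $\sup S(u)\leq_F\sup S(u')$, i.e.\ $w\leq_F w'$; and conversely, for $w\leq_F w'$ with $w'\neq w_0$, independence lets one push $b_{w'}$ alone to reach the pattern $S\cup\singleton{w'}$, whose join is $w'$ because $\sup S=w\leq_F w'$. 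This is where the lattice structure is genuinely used: every subset of $F\setminus\singleton{w_0}$ has a supremum, so every pattern, spurious or not, receives a well-defined label.
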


\begin{proof}
    We base our argument on~\cite[Lemma 7.3]{HamkinsLoewe2008:TheModalLogicOfForcing}. Consider the correspondence $w \mapsto b_w$ between the nodes of $F \setminus \singleton{w_0}$ and the independent buttons at $u_0$ in $N$. For an $S \subseteq F\setminus\singleton{w_0}$, define
    \begin{equation*}
        b_S = \big( \bigwedge_{s \in S} \necessary b_s \big) \land \big( \bigwedge_{s \in F \setminus (\singleton{w_0}\cup S)} \neg \necessary b_s \big),
    \end{equation*}
    which expresses that only the buttons in $S$ are pushed. For any $w \in F$, let
    \begin{equation*}
        \Phi_w = \bigvee \set{b_S \mid w = \sup S}.
    \end{equation*}
    
    We now show that $w \mapsto \Phi_w$ is a frame labeling of $F$ for $u_0$. For every world $u$ in $N$ accessible from $u_0$, let
    \begin{equation*}
        S(u)=\set{s\in F\setminus\singleton{w_0}\mid (N,u)\satisfies \necessary b_s}.
    \end{equation*}
    Then $b_{S(u)}$ holds at $u$. So $(N, u) \satisfies \Phi_w$ if and only if $w = \sup S(u)$. And so, precisely one $\Phi_w$ can hold for each such $u$. In particular, $(N,u_0) \satisfies \Phi_{w_0}$, where no buttons are pushed yet. The first and third conditions of the definition of frame labeling are therefore satisfied.

    For the forward direction of the second condition, suppose $(N,u) \satisfies \possible \Phi_{w'}$. There exists a world $u'$ accessible from $u$ such that $(N,u') \satisfies \Phi_{w'}$. Let $S = S(u)$ and $S' = S(u')$, and let $w = \sup S$ and $w' = \sup S'$. The buttons pushed at $u$ remain so in all accessible worlds, hence $S \subseteq S'$. As a result, $w = \sup S \leq \sup S' = w'$.
    
    Conversely, suppose that $(N,u) \satisfies \Phi_w$, and let $S = S(u)$, so that $w = \sup S$. Consider $w' \geq_F w$. If $w'=w_0$, then $w=w_0$, so $(N,u)\satisfies \Phi_{w_0}$. By reflexivity, $(N,u) \satisfies \possible \Phi_{w_0}$. Otherwise, by independence of the buttons, pushing only $b_{w'}$ at $u$ means accessing some $u'$ where precisely the buttons indexed by the set $S \cup \singleton{w'}$ are pushed, that is, $(N,u')\satisfies b_{S\cup\singleton{w'}}$. But $w = \sup S \leq_F w'$, and so $w' = \sup (S \cup \singleton{w'})$. That means $(N,u') \satisfies \Phi_{w'}$, hence $(N,u) \satisfies \possible \Phi_{w'}$. 
\end{proof}

Finally, we are ready to show that arbitrarily large finite families of independent buttons suffice to bound the propositional modal validities by \theoryf{Grz.2}.

\begin{theorem}\label{Theorem.buttons-then-upper-bound-Grz.2}
    Suppose $u_0$ is an initial world in a Kripke model $N$ that admits arbitrarily large finite families of independent buttons. Then, the propositional modal assertions valid at $u_0$ are contained in the modal logic \theoryf{Grz.2}.
\end{theorem}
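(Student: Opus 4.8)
The plan is to prove this by reducing to the completeness of $\theoryf{Grz.2}$ for finite lattices (Theorem~\ref{Theorem.Grz.2-is-characterized-by-finite-lattices}) via the frame labeling machinery just developed. Arguing by contrapositive: suppose $\varphi$ is not provable in $\theoryf{Grz.2}$. Then by Theorem~\ref{Theorem.Grz.2-is-characterized-by-finite-lattices} there is a finite lattice $F$, a Kripke model $M$ based on $F$, and an initial node $w_0$ with $(M,w_0)\not\satisfies\varphi$. The lattice $F$ is finite, so it has exactly $|F|-1$ non-initial nodes; since $u_0$ admits arbitrarily large finite families of independent buttons, in particular it admits at least $(|F|-1)$-many independent buttons. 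Because we have standingly assumed all frames validate $\theoryf{S4}$, the hypothesis of Lemma~\ref{Lemma.buttons-and-lattices} is met, so there is a frame labeling of $F$ for $u_0$ in $N$.

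Next I would invoke the \nameref{FrameLabelingLemma}: from a frame labeling of the finite frame $F$ for $(N,u_0)$, we obtain for the specific model $M$ based on $F$ a model labeling $p\mapsto\psi_p$ of $(M,w_0)$ for $(N,u_0)$. Unwinding the definition of model labeling with the assertion $\varphi$: since $(M,w_0)\not\satisfies\varphi(p_0,\ldots,p_k)$, we get $(N,u_0)\not\satisfies\varphi(\psi_{p_0},\ldots,\psi_{p_k})$, so a substitution instance of $\varphi$ fails at $(N,u_0)$, and hence $\varphi$ is not valid at $u_0$. Since $\varphi$ was an arbitrary non-theorem of $\theoryf{Grz.2}$, every modal assertion valid at $u_0$ in $N$ is provable in $\theoryf{Grz.2}$, which is exactly the claim.

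There is essentially no obstacle remaining: all the heavy lifting has been done in the preceding lemmas (Lemma~\ref{Lemma.buttons-and-lattices} for the frame labeling, the \nameref{FrameLabelingLemma} for passing to model labelings, and Theorem~\ref{Theorem.Grz.2-is-characterized-by-finite-lattices} for completeness over finite lattices). The only point requiring the tiniest care is the bookkeeping that ``arbitrarily large finite families of independent buttons'' in particular supplies a family of size $|F|-1$ for the given finite $F$ — but this is immediate. One might also note for emphasis that the theorem is optimal: the failure of a button family of a fixed finite size to be independent is expressible in propositional modal logic, and such a formula is a non-theorem of $\theoryf{Grz.2}$, so no finite bound on the button families would suffice; this remark mirrors the optimality comments made for Theorem~\ref{Theorem.railyard-then-upper-bound-Grz}.
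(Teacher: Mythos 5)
Your proposal is correct and follows essentially the same route as the paper's own proof: obtain a countermodel on a finite lattice (the paper cites Lemma~\ref{Lemma.not-part-of-Grz.2-fails-in-baled-tree} directly, you cite Theorem~\ref{Theorem.Grz.2-is-characterized-by-finite-lattices}, which amounts to the same thing), apply Lemma~\ref{Lemma.buttons-and-lattices} with $|F|-1$ buttons to get a frame labeling, and pass through the \nameref{FrameLabelingLemma} to transfer the failure of $\varphi$ to $(N,u_0)$. The only difference is that you spell out the bookkeeping about the number of buttons and the standing $\theoryf{S4}$ assumption, which the paper leaves implicit.
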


\begin{proof}
    Parts of the proof are reminiscent of~\cite[Lemma 9]{HamkinsLoewe2008:TheModalLogicOfForcing}. Suppose $\theoryf{Grz.2}$ does not prove an assertion $\varphi$. By lemma~\ref{Lemma.not-part-of-Grz.2-fails-in-baled-tree}, there is a Kripke model $M$ with an initial world $w_0$ based on a Kripke frame that is a finite lattice such that $w_0$ does not satisfy $\varphi$ in $M$. By the~\nameref{FrameLabelingLemma} and lemma~\ref{Lemma.buttons-and-lattices}, there is a model labeling $p \mapsto \psi_p$ of $M$ for $u_0$ in $N$. That is,
    \begin{equation*}
        (M,w_0) \satisfies \varphi(p_0,\ldots,p_k) \quad \text{if and only if} \quad (N,u_0) \satisfies \varphi(\psi_{p_0},\ldots,\psi_{p_k}). 
    \end{equation*}
    But $(M,w_0) \satisfies \neg \varphi$, so $(N,u_0) \satisfies \neg \varphi(\psi_{p_0},\ldots,\psi_{p_k})$. And so, $\varphi$ cannot be valid at $u_0$. As $\varphi$ was chosen arbitrarily, any modal assertion not provable in \theoryf{Grz.2} cannot be valid at $u_0$.
\end{proof}

Theorem~\ref{Theorem.buttons-then-upper-bound-Grz.2} is the best possible---one can express in the language of propositional modal logic that there are no $n$ independent buttons, and this assertion is not provable in \theoryf{Grz.2}.

\subsection{Control statement theory for the modal logic \normalsize G\scriptsize RZ\normalsize .3}

If there are arbitrarily long finite ratchets mutually independent with arbitrarily large finite families of
independent switches, then the upper bounds on the propositional modal validities are \theoryf{S4.3}. I should like to show that if we remove switches, that is if we just have arbitrarily long finite ratchets, then the upper bounds constitute the modal logic \theoryf{Grz.3}.

\medskip

The relevant control statements are built from buttons. Namely, buttons $r_i$, where $i<n$, form a \emph{ratchet} of length $n$ if:
\begin{enumerate}
    \item Initially only the first button, $r_0$, is pushed.
    \item Necessarily, pushing $r_j$ pushes all $r_i$ with $i<j$.
    \item Necessarily, if $r_i$ is unpushed, it is possible to push it without pushing any $r_j$ with $j>i$.
\end{enumerate}
For $i<n-1$, pushing the button $r_i$ (and hence all the preceding unpushed buttons) but not $r_{i+1}$ increases the ratchet \emph{volume} to $i+1$ (so in our ratchets the volume ranges over $1,\dots,n$ since $r_0$ is initially pushed). By definition, the ratchet volume can go up one-by-one until it is fully cranked and reaches volume $n$.

\begin{lemma}\label{Lemma.finite-linear-order-n-ratchet}
    Suppose $F$ is a Kripke frame that is a finite linear order of length $n$ with an initial node $w_0$ and $L$ is a modal logic containing \theoryf{S4}. If $N$ is a Kripke model and $u_0$ a world in it that satisfies $L$ and has a ratchet of length $n$, then there exists a frame labeling of $F$ for $u_0$.
\end{lemma}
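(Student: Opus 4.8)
The plan is to mimic the structure of the proof of Lemma~\ref{Lemma.buttons-and-lattices}, but using the linear order of the ratchet volume in place of the Boolean lattice of pushed-button patterns. Identify the nodes of $F$ with the integers $0,\dots,n-1$, where $w_0 = 0$. The ratchet volume $v$ at a world $u$ accessible from $u_0$ takes a value in $\{1,\dots,n\}$, determined by which of the buttons $r_i$ are pushed; since $N\models\theoryf{S4}$, once a button is pushed it stays pushed, so the volume is monotone along accessibility. For each node $k\in F$ define $\Phi_k$ to be the statement ``the ratchet volume is exactly $k+1$'', which is expressible as $\necessary r_k \land \neg \necessary r_{k+1}$ for $k<n-1$ (using condition (2) of the ratchet to know the lower buttons are pushed) and as $\necessary r_{n-1}$ for $k=n-1$.

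Then I would verify the three conditions of a frame labeling. Condition (1): at $u_0$ only $r_0$ is pushed, so the volume is $1$ and $(N,u_0)\models\Phi_{w_0}=\Phi_0$. Condition (3): at every accessible $u$, exactly one $\Phi_k$ holds, because the set of pushed buttons among $r_0,\dots,r_{n-1}$ is, by conditions (1)–(2) of the ratchet, always a downward-closed initial segment $\{r_0,\dots,r_{v-1}\}$ of some size $v\in\{1,\dots,n\}$, and then $\Phi_{v-1}$ is the unique label satisfied. Condition (2): suppose $(N,u)\models\Phi_k$, so the volume at $u$ is $k+1$. For the forward direction, if $(N,u)\models\possible\Phi_m$ then there is $u'\geq u$ with volume $m+1$; monotonicity of pushed buttons forces $m+1\geq k+1$, i.e.\ $k\leq_F m$. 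For the converse, suppose $k\leq_F m$, i.e.\ $m\geq k$. If $m=k$, reflexivity of $N$ (from $\theoryf{S4}$) gives $(N,u)\models\possible\Phi_k$. If $m>k$, apply condition (3) of the ratchet repeatedly (or in one step, cranking the volume up from $k+1$ to $m+1$): since at $u$ the buttons $r_k,r_{k+1},\dots,r_{m-1}$ are unpushed, it is possible from $u$ to push $r_{m-1}$ without pushing $r_{m}$ or higher, which by condition (2) pushes exactly $r_0,\dots,r_{m-1}$, reaching a world $u'$ of volume $m+1$ with $(N,u')\models\Phi_m$; hence $(N,u)\models\possible\Phi_m$.

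I would conclude that $k\mapsto\Phi_k$ is a frame labeling of the linear order $F$ for $u_0$ in $N$, which is the assertion of the lemma. (The hypothesis that $L\supseteq\theoryf{S4}$ is used only through reflexivity and the persistence of pushed buttons; the containment $L\supseteq\theoryf{S4}$ beyond that plays no further role here, exactly as in Lemma~\ref{Lemma.buttons-and-lattices}.)

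The main obstacle I anticipate is purely bookkeeping: making precise the claim that the pushed-button set is always an initial segment and that ``pushing $r_{m-1}$ but not $r_m$'' lands in volume exactly $m+1$ rather than overshooting. This is exactly where ratchet conditions (2) and (3) must be invoked with care — condition (2) guarantees the downward closure so that ``volume'' is well defined, and condition (3) must be applied at the specific button $r_{m-1}$, with the guarantee ``without pushing any $r_j$ with $j>m-1$'' ensuring we do not accidentally realize a larger volume. Unlike the Boolean case in Lemma~\ref{Lemma.buttons-and-lattices}, here we cannot push an arbitrary single button independently; we are constrained to move the volume up, but fortunately in a linear order the only nodes above $k$ are exactly the larger volumes, so this constraint is precisely what condition (2) of the frame labeling demands.
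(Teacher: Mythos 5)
Your construction is the same as the paper's: label the node $w_i$ with the assertion that the ratchet volume is exactly $i+1$, then check the frame-labeling conditions using persistence of pushed buttons for one direction and ratchet condition (3) for the other. The paper dismisses the verification as immediate, so your spelling it out is welcome, but the converse half of condition (2) contains a consistent off-by-one slip. Under the paper's convention---which you state correctly when verifying condition (3)---a world where exactly the buttons $r_0,\dots,r_{v-1}$ are pushed has volume $v$ and satisfies $\Phi_{v-1}$. Hence at a world $u$ of volume $k+1$ the button $r_k$ is already pushed and it is $r_{k+1},\dots,r_{n-1}$ that are unpushed, not ``$r_k,\dots,r_{m-1}$'' as you write; and pushing $r_{m-1}$ without pushing $r_m$ lands at a world where exactly $r_0,\dots,r_{m-1}$ are pushed, which has volume $m$ and satisfies $\Phi_{m-1}$, not $\Phi_m$. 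The correct move is to apply ratchet condition (3) to the button $r_m$ itself, which is unpushed at $u$ since $m>k$: push $r_m$ without pushing any $r_j$ with $j>m$; by ratchet condition (2) this pushes all of $r_0,\dots,r_m$, yielding volume $m+1$ and hence $\Phi_m$ at the resulting world. With that single index corrected, your argument is complete and coincides with the paper's.
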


\begin{proof}
    Let $w_0 <_F w_1 <_F \ldots <_F w_{n-1}$ be the nodes of $F$. Suppose $r_i$, $i<n$, is a ratchet of length $n$ at $u_0$. Define $\Phi_{w_i}$ to assert that the ratchet buttons are pushed precisely up to $r_i$, that is the ratchet volume is $i+1$. At $u_0$, only $r_0$ is pushed. For $(N,u) \satisfies \Phi_{w_i}$, it is immediate that $(N,u) \satisfies \possible \Phi_{w_j}$ if and only if $w_i \leq_F w_j$. We have thus given a labeling of $F$ for $u_0$ in $N$.
\end{proof}

\begin{theorem}\label{Theorem.ratchets-then-upper-bound-Grz.3}
    Suppose $u_0$ is an initial world in a Kripke model $N$ that admits ratchets of arbitrarily large finite lengths. Then, the propositional modal assertions valid at $u_0$ are contained in the modal logic \theoryf{Grz.3}.
\end{theorem}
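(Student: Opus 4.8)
The plan is to reprise the argument for Theorem~\ref{Theorem.buttons-then-upper-bound-Grz.2}, with finite lattices replaced by finite linear orders and independent buttons replaced by ratchets. The starting point is the observation that \theoryf{Grz.3} is characterized by the class of all finite linear orders: the logic \theoryf{Grz.3} corresponds to the antiwellfounded connected partial orders of fatness one, it has the finite frame property, and a finite rooted frame of this kind is necessarily a chain. (In \ZFC one argues as for \theoryf{Grz} earlier in the article, and the $\mathsf{ZF}$ case follows either by a direct inspection of finite frames or by Shoenfield absoluteness, since both \theoryf{Grz.3} and the set of finite linear orders are recursively enumerable.) Hence, if \theoryf{Grz.3} does not prove $\varphi$, there is a Kripke model $M$ with an initial world $w_0$ whose underlying frame is a finite linear order, say of length $n$, with $(M,w_0)\not\satisfies\varphi$.

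Next I would feed this into the labeling machinery. Since $u_0$ admits ratchets of arbitrarily large finite lengths, in particular it has a ratchet of length $n$; as $N$ validates \theoryf{S4} by our standing assumption, Lemma~\ref{Lemma.finite-linear-order-n-ratchet} produces a frame labeling of the length-$n$ linear order for $u_0$ in $N$. The \nameref{FrameLabelingLemma} then upgrades this to a model labeling of $(M,w_0)$ for $(N,u_0)$, that is, an assignment $p\mapsto\psi_p$ with
\begin{equation*}
    (M,w_0) \satisfies \varphi(p_0,\ldots,p_k) \quad \text{if and only if} \quad (N,u_0) \satisfies \varphi(\psi_{p_0},\ldots,\psi_{p_k}).
\end{equation*}
Because $(M,w_0)\satisfies\neg\varphi$, this equivalence gives $(N,u_0)\satisfies\neg\varphi(\psi_{p_0},\ldots,\psi_{p_k})$, so $\varphi$ is not valid at $u_0$. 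Since $\varphi$ was an arbitrary non-theorem of \theoryf{Grz.3}, every modal assertion valid at $u_0$ in $N$ lies in \theoryf{Grz.3}.

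I expect the only non-mechanical step to be the first one---pinning down that \theoryf{Grz.3} is characterized by finite linear orders rather than by some larger class of connected finite frames. Everything downstream is a direct transcription of the \theoryf{Grz.2} proof: once the finite countermodel is known to live on a chain, matching it with a ratchet of the right length and invoking Lemma~\ref{Lemma.finite-linear-order-n-ratchet} together with the \nameref{FrameLabelingLemma} is automatic. As with Theorem~\ref{Theorem.buttons-then-upper-bound-Grz.2}, it is also worth remarking afterwards that this bound is optimal, since the nonexistence of a ratchet of length $n$ is expressible in propositional modal logic and is not a theorem of \theoryf{Grz.3}.
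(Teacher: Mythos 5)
Your proposal is correct and follows essentially the same route as the paper: reduce to a countermodel on a finite linear order (the paper simply asserts that \theoryf{Grz.3} is characterized by finite linear orders, where you supply a bit more justification), then combine Lemma~\ref{Lemma.finite-linear-order-n-ratchet} with the \nameref{FrameLabelingLemma} to transfer the failure of $\varphi$ to $(N,u_0)$. The concluding optimality remark likewise matches the paper's own follow-up comment.
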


\begin{proof}
    We proceed akin to~\cite[Theorem 11]{HamkinsLeibmanLoewe2015:StructuralConnectionsForcingClassAndItsModalLogic}. Suppose that $\theoryf{Grz.3} \not \proves \varphi$. It is easy to show that the modal logic $\theoryf{Grz.3}$ is characterized by finite linear orders. Therefore, there exists a Kripke model $M$ with an initial world $w_0$ based on a finite linearly ordered Kripke frame, and such that $(M,w_0) \not \satisfies \varphi$. By the~\nameref{FrameLabelingLemma} and lemma~\ref{Lemma.finite-linear-order-n-ratchet}, there is a model labeling of $M$ for $u_0$ in $N$. That is, there is an assignment of propositional variables to modal assertions $p \mapsto \psi_p$ such that
    \begin{equation*}
        (M,w_0) \satisfies \varphi(p_0,\ldots,p_k) \quad \text{if and only if} \quad (N, u_0) \satisfies \varphi(\psi_{p_0}, \ldots, \psi_{p_k}).
    \end{equation*}
    We have that $(M,w_0) \satisfies \neg \varphi$, and so $(N,u_0) \satisfies \neg \varphi(\psi_{p_0}, \ldots, \psi_{p_k})$. That means $\varphi$ cannot be valid at $u_0$. And since $\varphi$ was chosen arbitrarily, every modal assertion that is not provable in $\theoryf{Grz.3}$ is not valid at $u_0$ in $N$.
\end{proof}

Like in the previous cases, theorem~\ref{Theorem.ratchets-then-upper-bound-Grz.3} is the best possible. If $u_0$ in $N$ does not admit a labeling of a ratchet of length $n$, then this is expressible by a propositional modal assertion that is not provable in the modal logic $\theoryf{Grz.3}$.

\section{\theoryf{Grz.2} is characterized by finite Boolean algebras}\label{Section.Grz.2-complete-for-finite-Boolean-algebras}

We now have the necessary tools to improve theorem~\ref{Theorem.Grz.2-is-characterized-by-finite-lattices}. In this section, we establish the~\nameref{CharacterizationTheorem}, restated below.

\begin{CharacterizationTheorem}
    The modal logic \theoryf{Grz.2} is characterized by finite Boolean algebras. That is, a propositional modal assertion $\varphi$ is provable in $\theoryf{Grz.2}$ if and only if $\varphi$ is valid in every finite Boolean algebra. 
\end{CharacterizationTheorem}

The~\nameref{CharacterizationTheorem} is in fact the best possible characterization of the modal logic \theoryf{Grz.2}, meaning that no proper subclass of the class of all finite Boolean algebras can characterize it. This is because the Kripke model $N$ produced in the lemma below has the smallest frame
supporting an independent family of $n$ buttons (for independence one needs to realize every pattern of the buttons). And the assertion that there are no $n$ independent buttons is expressible in the language of propositional modal logic and is not provable in \theoryf{Grz.2}.

\begin{lemma}\label{Lemma.n-many-buttons}
    For any $n < \omega$, there is a Kripke model $N$ with an initial world $u_0$ whose Kripke frame is a finite Boolean algebra, such that $u_0$ admits $n$-many independent buttons.
\end{lemma}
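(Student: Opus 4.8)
The plan is to write down the Kripke model $N$ explicitly and then verify the three clauses in the definition of an independent family of buttons. Take the underlying frame of $N$ to be the poset of the finite Boolean algebra on $n$ atoms, namely $(\mathcal{P}(n),\subseteq)$ with $n=\{0,\dots,n-1\}$ and accessibility given by $\subseteq$; this is a finite lattice (indeed a finite Boolean algebra as a Kripke frame), it is a finite directed partial order, so it validates $\theoryf{Grz.2}$, and it has least element $\emptyset$. Put $u_0=\emptyset$, which is then an initial world since every $S\subseteq n$ is accessible from it and it generates the whole frame. Introduce propositional variables $b_0,\dots,b_{n-1}$ and stipulate $(N,S)\satisfies b_i$ exactly when $i\in S$.

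The first computation is that for every world $S$ and every $i<n$ we have $(N,S)\satisfies\necessary b_i$ if and only if $i\in S$: the set of worlds satisfying $b_i$ is the up-set $\set{T\mid i\in T}$, which is upward closed, so $\necessary b_i$ and $b_i$ have the same extension. From this, $(N,S)\satisfies\possible\necessary b_i$ at every $S$ (witnessed by $S\cup\singleton{i}\supseteq S$), and therefore $(N,u_0)\satisfies\necessary\possible\necessary b_i$, so each $b_i$ is a button at $u_0$. Moreover $(N,u_0)\not\satisfies\necessary b_i$ because $i\notin\emptyset$, so none of the buttons is necessary at $u_0$.

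For independence, the guiding observation is that the button pattern realized at a world $S$ is $S$ itself: for $A\subseteq n$ the statement $\Theta_A=\bigwedge_{i\in A}\necessary b_i\land\bigwedge_{i\in n\setminus A}\neg\necessary b_i$ holds at $S$ iff $A\subseteq S$ and $S\cap(n\setminus A)=\emptyset$, i.e.\ iff $S=A$. Hence, for any $A\subseteq n$, if $S$ is accessible from $u_0$ and $(N,S)\satisfies\Theta_A$ then $S=A$, and for every $A^{+}$ with $A\subseteq A^{+}\subseteq n$ the world $A^{+}$ is accessible from $S=A$ and satisfies $\Theta_{A^{+}}$, giving $(N,S)\satisfies\possible\Theta_{A^{+}}$. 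This establishes $(N,u_0)\satisfies\bigwedge_{A\subseteq n}\necessary\big(\Theta_A\implies\bigwedge_{A\subseteq A^{+}\subseteq n}\possible\Theta_{A^{+}}\big)$, which together with the previous paragraph is precisely the assertion that $b_0,\dots,b_{n-1}$ are $n$ independent buttons at $u_0$.

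There is no serious obstacle here; the content is entirely bookkeeping. The two points to keep straight are that $\necessary b_i$ collapses to $b_i$ on this frame (so the ``pushed'' states of the buttons are exactly membership of the atoms in $S$), and that distinct subsets of $n$ give distinct, mutually exclusive patterns $\Theta_A$, each of which is reachable from any smaller one. Finally, for the optimality remark accompanying the \nameref{CharacterizationTheorem} one notes that a world can satisfy at most one $\Theta_A$ while all $2^{n}$ of them must be realizable from $u_0$, so any frame carrying $n$ independent buttons has at least $2^{n}$ nodes; the frame $(\mathcal{P}(n),\subseteq)$ meets this bound exactly.
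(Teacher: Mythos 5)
Your construction is exactly the one the paper uses: the powerset of $n$ ordered by inclusion with $u_0=\emptyset$ and $b_i$ true at $S$ precisely when $i\in S$; you simply carry out in full the verification that the paper compresses into one sentence (noting that $\necessary b_i$ collapses to $b_i$ on this frame and that $\Theta_A$ holds exactly at the world $A$). The proposal is correct and takes essentially the same approach as the paper.
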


\begin{proof}
     We proceed like in~\cite[Lemma 8]{HamkinsLoewe2008:TheModalLogicOfForcing}. Take the Kripke frame to be the powerset of $n$ ordered by inclusion. This is a finite Boolean algebra, whose nodes are subsets $B$ of $n$. Let $u_0=\varnothing$ be the initial world. Now let $N$ be a Kripke model based on that Kripke frame where $b_i$ is true just in case $i \in B$. Clearly, every $b_i$ is a button, and---since whatever the button-pattern $B$ any world has, any larger pattern $B' \fo B$ is possible---they are all independent at $u_0$.
\end{proof}

The following is an analogue of~\cite[Lemma 9]{HamkinsLoewe2008:TheModalLogicOfForcing} but with no switches.

\begin{theorem}\label{Theorem.Grz.2-characterization-buttons}
    A class of Kripke frames characterizes the modal logic $\theoryf{Grz.2}$ if and only if $\theoryf{Grz.2}$ is sound with respect to that class and there exist Kripke models based on frames from the class whose initial worlds admit arbitrarily large finite families of independent buttons.
\end{theorem}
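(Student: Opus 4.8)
The plan is to prove the two implications separately, leaning on the machinery already assembled. For the forward direction, suppose a class $\mathcal{C}$ of Kripke frames characterizes $\theoryf{Grz.2}$. Soundness with respect to $\mathcal{C}$ is immediate from the characterization. For the button requirement, I would argue by contradiction: if there were a finite bound $n$ on the size of independent button families admissible at initial worlds of models based on frames in $\mathcal{C}$, then the propositional modal assertion expressing ``there do not exist $n+1$ independent buttons'' would be valid across all of $\mathcal{C}$ (since each such model fails to support $n+1$ independent buttons, this assertion holds at its initial world, and passing to generated submodels lets us treat these worlds as initial without loss). But by the remark following Lemma~\ref{Lemma.n-many-buttons} — or directly via Lemma~\ref{Lemma.n-many-buttons} together with soundness failing — this assertion is not provable in $\theoryf{Grz.2}$, contradicting that $\mathcal{C}$ characterizes $\theoryf{Grz.2}$. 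Here I should be slightly careful that ``valid in $\mathcal{C}$'' means valid at every world of every frame in $\mathcal{C}$, not merely at designated initial worlds; the standard move is to note that frame validity is inherited by generated subframes, so validity at all initial worlds of generated subframes is the same as frame validity.

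For the reverse direction, assume $\theoryf{Grz.2}$ is sound with respect to $\mathcal{C}$ and that there are models $N_n$ based on frames $F_n \in \mathcal{C}$ whose initial worlds $u_0^{(n)}$ admit arbitrarily large finite families of independent buttons. Soundness gives one containment: every $\theoryf{Grz.2}$-theorem is valid in $\mathcal{C}$. For the converse containment, suppose $\varphi$ is not provable in $\theoryf{Grz.2}$; I want to exhibit a frame in $\mathcal{C}$ on which $\varphi$ fails. Apply Theorem~\ref{Theorem.buttons-then-upper-bound-Grz.2}: since each $u_0^{(n)}$, taken across all $n$, collectively witnesses arbitrarily large finite families of independent buttons — more precisely, fix a single model $N$ from the family that already admits families of every finite size, or observe that Theorem~\ref{Theorem.buttons-then-upper-bound-Grz.2} only needs, for each finite $m$, \emph{some} model in hand with $m$ independent buttons — the propositional modal validities at that world are contained in $\theoryf{Grz.2}$. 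Unwinding the proof of Theorem~\ref{Theorem.buttons-then-upper-bound-Grz.2}: from $\theoryf{Grz.2} \not\proves \varphi$ we get, via Lemma~\ref{Lemma.not-part-of-Grz.2-fails-in-baled-tree}, a finite lattice $F$ of some size, say with $|F|-1 = m$; then by Lemma~\ref{Lemma.buttons-and-lattices} and the~\nameref{FrameLabelingLemma}, a model $N$ in the family with $m$ independent buttons yields a model labeling of $(M,w_0)$ for its initial world, whence $\varphi$ fails there. Since that world lies in a model based on a frame in $\mathcal{C}$, the assertion $\varphi$ is not valid in $\mathcal{C}$.

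The main obstacle I anticipate is a precise bookkeeping issue rather than a conceptual one: the hypothesis ``there exist Kripke models based on frames from the class whose initial worlds admit arbitrarily large finite families of independent buttons'' must be read correctly to feed into Theorem~\ref{Theorem.buttons-then-upper-bound-Grz.2}, whose own hypothesis is phrased as a \emph{single} world admitting arbitrarily large families. I would resolve this by noting that the proof of Theorem~\ref{Theorem.buttons-then-upper-bound-Grz.2} in fact only invokes, for the particular finite lattice produced from the non-theorem $\varphi$, the existence of enough buttons in \emph{one} model — so a uniformly-indexed family $\{(N_n, u_0^{(n)})\}_{n<\omega}$ with $u_0^{(n)}$ admitting $n$ independent buttons is exactly what is needed, each non-theorem being refuted using the appropriate member of the family. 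The second subtlety, already flagged above, is the translation between ``valid at all initial worlds'' and ``frame-valid'' in the forward direction; this is handled by the standard generated-subframe observation, which the paper has implicitly been using throughout (cf.\ the discussion preceding the~\nameref{ModelFrameLabelingLemma}).
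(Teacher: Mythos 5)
Your proposal is correct and follows essentially the same route as the paper: the forward direction uses the expressibility of ``there are no $n$ independent buttons'' together with Lemma~\ref{Lemma.n-many-buttons}, and the reverse direction unwinds to the same chain of Lemma~\ref{Lemma.not-part-of-Grz.2-fails-in-baled-tree}, Lemma~\ref{Lemma.buttons-and-lattices}, and the \nameref{FrameLabelingLemma}, with the class member supplying $|F|-1$ buttons for each non-theorem. The bookkeeping point you flag (one suitably large model per non-theorem rather than a single model with all sizes) is resolved exactly as the paper does it.
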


\begin{proof}
    Suppose a class of Kripke frames characterizes \theoryf{Grz.2}. By definition, \theoryf{Grz.2} must be sound with respect to that class. It is expressible in the language of propositional modal logic that no family of $n$ independent buttons exists. However, by lemma~\ref{Lemma.n-many-buttons}, this assertion is refutable in \theoryf{Grz.2}, so it cannot be valid in any class of Kripke frames that characterizes \theoryf{Grz.2}. Consequently, for each $n < \omega$, there must be a Kripke model based on a frame from the class that admits $n$ independent buttons.

    For the converse, assume there is a class of Kripke frames with respect to which \theoryf{Grz.2} is sound and for which there exist Kripke models that admit arbitrarily large finite families of independent buttons. If $\theoryf{Grz.2} \not\proves \varphi$, then by lemma~\ref{Lemma.not-part-of-Grz.2-fails-in-baled-tree}, there is a Kripke model $M$ based on a Kripke frame $F$ that is a lattice, and for an initial world $w_0$ in $M$, we have $(M,w_0) \not\satisfies \varphi$. Let $N$ be a Kripke model based on a frame from the class such that an initial world $u_0$ in $N$ admits $|F|-1$ independent buttons. By lemma~\ref{Lemma.buttons-and-lattices}, there is a labeling of $F$ for $u_0$ in $N$. Thus, by the~\nameref{FrameLabelingLemma}, there is a model labeling of $M$ for $u_0$ in $N$, meaning there exists an assignment of propositional variables to modal assertions $p \mapsto \psi_p$ such that
    \begin{equation*}
        (M,w_0) \satisfies \varphi(p_0,\ldots,p_k) \quad \text{if and only if} \quad (N,u_0) \satisfies \varphi(\psi_{p_0},\ldots,\psi_{p_k}).
    \end{equation*}
    But since $(M,w_0) \not\satisfies \varphi$, it follows that $(N,u_0) \not\satisfies \varphi(\psi_{p_0},\ldots,\psi_{p_k})$. Therefore, we have a substitution instance of $\varphi$ that fails in $N$. Since $N$ is a Kripke model based on a frame from the class, we conclude that any assertion refutable in \theoryf{Grz.2} is not valid in the class. Moreover, by soundness, every assertion in \theoryf{Grz.2} is valid in the class. Thus, the class of Kripke frames indeed characterizes \theoryf{Grz.2}.
    \end{proof}

    Let me conclude this section with a theorem that summarizes the Kripke frame characterizations for the modal logic \theoryf{Grz.2}, and which extends theorem~\ref{Theorem.Grz-2-characterization-partial}.

\begin{theorem}
    The following sets of Kripke frames characterize the modal logic $\theoryf{Grz.2}$.
    \begin{enumerate}
        \item Finite directed partial orders.
        \item Finite lattices.
        \item Finite baled trees.
        \item Finite Boolean algebras.
    \end{enumerate}
\end{theorem}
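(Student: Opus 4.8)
The plan is to observe that this final theorem is essentially a bookkeeping consolidation of results already established, so the proof amounts to citing the right earlier statements for each of the four items and invoking Theorem~\ref{Theorem.Grz.2-characterization-buttons} as the unifying principle. First I would recall that Theorem~\ref{Theorem.Grz.2-characterization-buttons} reduces the task of checking that a class $\mathcal{C}$ of Kripke frames characterizes $\theoryf{Grz.2}$ to two conditions: (i) $\theoryf{Grz.2}$ is sound with respect to $\mathcal{C}$, and (ii) there exist Kripke models based on frames from $\mathcal{C}$ whose initial worlds admit arbitrarily large finite families of independent buttons. So for each of the four classes it suffices to verify these two conditions.

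For soundness, I would note uniformly that every frame in each of the four classes is a finite directed partial order (a finite baled tree has a greatest element hence is directed; a finite lattice is directed; a finite Boolean algebra is a finite lattice; a finite directed partial order is trivially one), and $\theoryf{Grz.2}$ is sound with respect to finite directed partial orders, as recorded in the discussion following Theorem~\ref{Theorem.Grz.2-is-characterized-by-finite-lattices}. For the button condition: items (1), (2), (3) were already handled by Theorem~\ref{Theorem.Grz-2-characterization-partial} together with Theorem~\ref{Theorem.Grz.2-characterization-buttons} --- indeed since those three classes were shown to characterize $\theoryf{Grz.2}$, the forward direction of Theorem~\ref{Theorem.Grz.2-characterization-buttons} gives that they must contain models admitting arbitrarily large finite families of independent buttons. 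For item (4), the relevant ingredient is Lemma~\ref{Lemma.n-many-buttons}, which exhibits, for each $n<\omega$, a Kripke model based on a finite Boolean algebra (the powerset of $n$) whose initial world admits $n$-many independent buttons. Feeding this into the converse direction of Theorem~\ref{Theorem.Grz.2-characterization-buttons} yields that finite Boolean algebras characterize $\theoryf{Grz.2}$, which is precisely the \nameref{CharacterizationTheorem} --- so item (4) is a restatement of that theorem.

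I do not anticipate any genuine obstacle here; the only point requiring a sentence of care is making sure the soundness half is argued once and for all via the inclusion of all four classes in the class of finite directed partial orders, rather than re-proving it four times, and making sure the appeal to Theorem~\ref{Theorem.Grz.2-characterization-buttons} is stated in the correct direction for each item. Concretely, the write-up would read: items (1)--(3) are Theorem~\ref{Theorem.Grz-2-characterization-partial}; item (4) follows from Theorem~\ref{Theorem.Grz.2-characterization-buttons} and Lemma~\ref{Lemma.n-many-buttons} (or equivalently is the \nameref{CharacterizationTheorem}), using that the powerset Boolean algebras are sound for $\theoryf{Grz.2}$ because they are finite directed partial orders. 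That completes the proof.

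\begin{proof}
    This theorem consolidates results already obtained. By Theorem~\ref{Theorem.Grz.2-characterization-buttons}, a class of Kripke frames characterizes $\theoryf{Grz.2}$ if and only if $\theoryf{Grz.2}$ is sound with respect to that class and there are Kripke models based on frames from the class whose initial worlds admit arbitrarily large finite families of independent buttons. We verify both conditions for each of the four classes.

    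For soundness, observe that a finite baled tree has a greatest element, hence is directed; a finite lattice is directed; and a finite Boolean algebra is in particular a finite lattice. Thus every frame occurring in items (1)--(4) is a finite directed partial order, and $\theoryf{Grz.2}$ is sound with respect to the class of all finite directed partial orders, as noted after Theorem~\ref{Theorem.Grz.2-is-characterized-by-finite-lattices}. Hence $\theoryf{Grz.2}$ is sound with respect to each of the four classes.

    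For the button condition, items (1)--(3) are exactly the content of Theorem~\ref{Theorem.Grz-2-characterization-partial}: those three classes characterize $\theoryf{Grz.2}$, so by the forward direction of Theorem~\ref{Theorem.Grz.2-characterization-buttons} they contain Kripke models whose initial worlds admit arbitrarily large finite families of independent buttons. For item (4), Lemma~\ref{Lemma.n-many-buttons} provides, for every $n < \omega$, a Kripke model based on a finite Boolean algebra whose initial world admits $n$-many independent buttons. Therefore the class of finite Boolean algebras satisfies both conditions of Theorem~\ref{Theorem.Grz.2-characterization-buttons}, and hence characterizes $\theoryf{Grz.2}$; this is precisely the \nameref{CharacterizationTheorem}. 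Altogether, all four classes characterize the modal logic $\theoryf{Grz.2}$.
\end{proof}
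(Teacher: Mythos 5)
Your proof is correct and takes essentially the same route as the paper: soundness for all four classes via inclusion in the finite directed partial orders, items (1)--(3) resting on Lemma~\ref{Lemma.not-part-of-Grz.2-fails-in-baled-tree} (via Theorem~\ref{Theorem.Grz-2-characterization-partial}), and item (4) from Lemma~\ref{Lemma.n-many-buttons} together with Theorem~\ref{Theorem.Grz.2-characterization-buttons}. The only cosmetic difference is that you route items (1)--(3) through the forward direction of Theorem~\ref{Theorem.Grz.2-characterization-buttons}, which is harmless but redundant since those classes are already known to characterize $\theoryf{Grz.2}$.
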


\begin{proof}
    The modal logic \theoryf{Grz.2} is valid in every finite directed partial order and, in particular, is sound with respect to all the classes of Kripke frames under consideration. It therefore suffices to show that \theoryf{Grz.2} is complete for these classes. By lemma~\ref{Lemma.not-part-of-Grz.2-fails-in-baled-tree}, every non-theorem of \theoryf{Grz.2} fails on some finite baled tree, and hence on some finite directed partial order. Thus \theoryf{Grz.2} is complete for finite directed partial orders, establishing (1).

    Every baled tree is a lattice, so to prove completeness with respect to finite lattices, it suffices to show completeness with respect to finite baled trees. By lemma~\ref{Lemma.not-part-of-Grz.2-fails-in-baled-tree}, any assertion not provable in \theoryf{Grz.2} must fail at some world in a Kripke model based on a finite baled tree, thus establishing (2) and (3).

    Finally, by lemma~\ref{Lemma.n-many-buttons}, for any $n < \omega$, there exists a Kripke model based on a finite Boolean algebra whose initial world admits $n$ independent buttons. Therefore, by theorem~\ref{Theorem.Grz.2-characterization-buttons}, the class of all finite Boolean algebras characterizes the modal logic \theoryf{Grz.2}, establishing (4).
\end{proof}

\printbibliography

\newcommand\blfootnote[1]{%
	\begingroup
	\renewcommand\thefootnote{}\footnote{#1}%
	\addtocounter{footnote}{-1}%
	\endgroup
}

\end{document}